\newtheorem{theorem}{Theorem}[section]
\newtheorem{lemma}[theorem]{Lemma}
\newtheorem{proposition}[theorem]{Proposition}
\newtheorem{corollary}[theorem]{Corollary}
\newcommand{\Rz}{\mathbb{R}}
\newcommand{\Nz}{\mathbb{N}}
\newcommand{\Zz}{\mathbb{Z}}
\newcommand{\epsi}{\varepsilon}
\newcommand{\eps}{\varepsilon}
\newcommand{\haz}{\widehat}
\newcommand{\vt}{v_3}
\newcommand{\vl}{v_2}
\newcommand{\bx}{\boldsymbol  x}
\newcommand{\bv}{\boldsymbol  v}
\newcommand{\bp}{\boldsymbol  p}
\newcommand{\bq}{\boldsymbol  q}
\newcommand{\by}{\boldsymbol y}
\newcommand{\bvarphi}{\boldsymbol \varphi}
\newcommand{\calF}{\mathcal F}
\newcommand{\calS}{\mathcal S}
\newcommand{\cE}{\mathcal E}
\newcommand{\bT}{\boldsymbol T}
\newcommand{\bN}{\boldsymbol N} 
\theoremstyle{definition}
\newtheorem{definition}[theorem]{Definition}
\begin{document}

\title[ Stability of 
  $\Zz^2$ configurations  in 3d ]{ Stability of 
  $\Zz^2$ configurations  in 3d }

\author[L. B\'etermin]{Laurent B\'etermin} 
\address[Laurent B\'etermin]{Institut Camille Jordan, Universit\'e Claude Bernard Lyon 1, 21 avenue Claude Bernard, 69622 Villeurbanne Cedex, France 
}
\email{betermin@math.univ-lyon1.fr}
\urladdr{https://sites.google.com/site/homepagelaurentbetermin/}

\author[M. Friedrich]{Manuel Friedrich} 
\address[Manuel Friedrich]{Department of Mathematics, Friedrich-Alexander Universit\"at Erlangen-N\"urnberg. Cauerstr.~11,
D-91058 Erlangen, Germany, \& Mathematics M\"{u}nster,  
University of M\"{u}nster, Einsteinstr.~62, D-48149 M\"{u}nster, Germany}
\email{manuel.friedrich@fau.de}
\urladdr{https://www.uni-muenster.de/AMM/en/Friedrich/}

\author[U. Stefanelli]{Ulisse Stefanelli} 
\address[Ulisse Stefanelli]{Faculty of Mathematics, University of
  Vienna, Oskar-Morgenstern-Platz 1, A-1090 Vienna, Austria,
Vienna Research Platform on Accelerating
  Photoreaction Discovery, University of Vienna, W\"ahringerstra\ss e 17, 1090 Wien, Austria,
 \& Istituto di
  Matematica Applicata e Tecnologie Informatiche {\it E. Magenes}, via
  Ferrata 1, I-27100 Pavia, Italy
}
\email{ulisse.stefanelli@univie.ac.at}
\urladdr{http://www.mat.univie.ac.at/$\sim$stefanelli}

\keywords{Configurations in $\Zz^2$, angle-rigidity, deformations, local minimizers}

\begin{abstract}  Inspired by the issue of stability of
  molecular structures, we investigate the strict minimality of  
  point sets with respect to configurational energies featuring two-
  and  three-body  contributions.  Our main focus is on characterizing
  those configurations which cannot be  
deformed without changing distances between first neighbors or angles
formed by pairs of first neighbors. Such configurations are called
{\it angle-rigid}.

We tackle this question  in the class of  
finite configurations in $\Zz^2$, seen as planar three-dimensional
point sets. A
sufficient condition  preventing angle-rigidity  is presented. This
condition is also proved to be necessary when restricted to 
specific  subclasses  of configurations.  
\end{abstract}

\subjclass[2010]{52C25, 
92E10.} 
                                %
\maketitle

\pagestyle{myheadings}

\section{Introduction}

 At moderate temperatures, atoms tend to form bonds and cluster together forming
molecules and crystals.  
The actual 3d arrangement of atoms in   such clusters has a
fundamental influence on its physical and mechanical
properties. Correspondingly, the 
prediction of such 3d geometries is of great importance
and a central object of study in chemistry and molecular biology.

The quantum nature of atomic bonding makes the
determination of 3d molecular geometries from first principles
 unfeasible  as the number of atoms scales  up.  A successful
model, capable of describing the geometry of  comparatively  large atomic
ensembles, is Molecular Mechanics  \cite{Friesecke-Theil15,Lewars}. There, configurations are
modeled as   collections of atomic positions and types, to which a {\it
  configurational energy} is associated. Such
energy is specified in terms of classical potentials and is 
phenomenological in nature. In particular, it usually features
two-body interactions, minimized at some preferred bond length, as
well as three-body terms,
favoring specific angles between bonds
\cite{Brenner,Stillinger-Weber85,Tersoff}.

In the case of identical atoms, under the classical Born-Oppenheimer
approximation,  the configuration
is purely determined by the atomic positions $\{\bx_1, \dots,\bx_n\}
\in \Rz^{3n}$, and
the configurational energy $\mathcal E=\mathcal E(\{\bx_1, \dots,\bx_n\})$. 
The latter is assumed to be 
decomposed as $\mathcal E=\mathcal E_2+\mathcal E_3$, where $\mathcal E_2$ and $\mathcal E_3$ correspond to two- and
three-body contributions, respectively. In particular, $\mathcal E_2$ is a
function of those pairs $(\bx_i,\bx_j)$ corresponding to bonded atoms, and
$\mathcal E_3$ depends on those triplets $(\bx_i,\bx_j,\bx_k)$ corresponding to
the angle formed by the bonds $(\bx_i,\bx_j)$ and
$(\bx_j,\bx_k)$.  Having in mind the specific case of covalent and ionic
bonding, we will neglect long-range
interactions in the following. In particular, $\mathcal E$ will feature
nearest- and (partially) next-to-nearest interaction only.  

A central issue with respect to molecular structuring is that of {\it
  stability}: actual 3d molecular arrangements need to show some
stability (thermodynamic, electrochemical, mechanical) in order to be
observed in experiments. In the context of Molecular Mechanics, this
calls for identifying strict  (local or global)  minimizers of the configurational energy.
In the setting of two- and three-body interactions described above, two
configurations with identical bond
lengths and bond angles necessarily have the 
 same energy. As such, in order to classify strict energy
minimizers, one is asked to identify those configurations which cannot be
infinitesimally perturbed (up to isometries) 
without changing bond  lengths  or bond angles. We call these
configurations {\it angle-rigid}, inspired by related rigidity
concepts in  graph theory,  see Section \ref{sec:discrete} below.
 Note that  the   concept of  angle-rigidity  has a clear  molecular-mechanical relevance,
for the occurrence of different molecular geometries  having same bond lengths
and angles is not uncommon and is usually referred to as {\it
  conformational isomerism}. Butane ($C_4H_{10}$) is a classical example
featuring more possible conformal isomers \cite{Allinger}.

The focus of this article is to investigate angle-rigidity for  a
specific class of  3d configurations   inspired by
crystallization \cite{Blanc},  see Figure~\ref{fig:initial}. 
\begin{figure}[h]
\centering
\pgfdeclareimage[width=125mm]{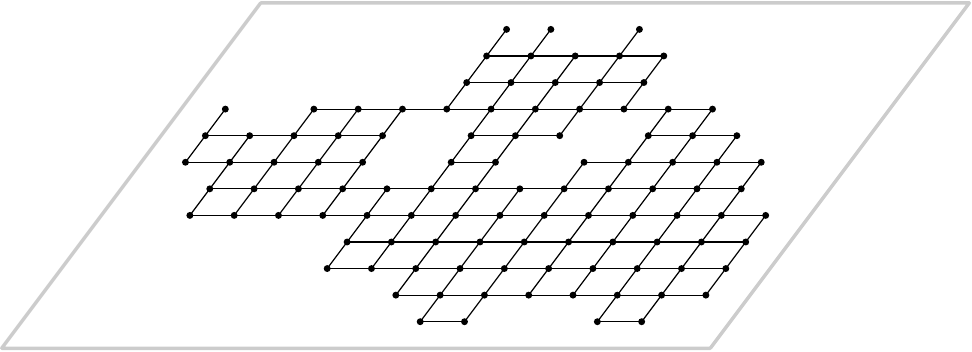}{initial}  
\pgfuseimage{initial} 
\caption{A $\Zz^2$ configuration}
\label{fig:initial}
\end{figure}  These are given as finite collections of
points in $\Zz^2\times \{0\}$, with bonds corresponding to first
neighbors. 
We advance  a  sufficient condition \eqref{suff} for 
strict minimality,  based on a  straightforward  combinatorial
argument. Such  sufficient  condition
 \eqref{suff}  is in general not necessary,  but turns out to fully
 characterize strict minimizers  when restricted to particular classes of 
configurations. We call such classes {\it foldings} and {\it
  shear-resistant  configurations},  inspired by the specific
geometry of their respective perturbations which do not change bonds
lengths nor bond angles.
 Note that these two classes do not cover all configurations: there
 are  non-strict minimizing  configurations which are neither
 foldings nor shear-resistant. On the other hand, every  strict
 minimizing 
 configurations is shear-resistant and thus  covered by the
 characterization (see also Figure \ref{fig:corollary}).  The
 relevance of our results in connection with applications in Molecular Mechanics is discussed
 in Section \ref{sec: appl} below.

 The paper is organized as follows. In Section \ref{sec:z2} we
 introduce notation and basic concepts. 
 Although our focus is on strict minimality of configurations, the
 setting of our problem is strongly reminiscent of rigidity in graph
 theory. We comment on this relation in  Section
 \ref{sec:discrete}. 
 The characterization of strict minimizers of two- and three-body-interaction
 configurational energies  in terms of angle-rigidity is given
 in Section \ref{sec: appl}. The sufficient condition
 for non-angle-rigidity is presented in Section \ref{sec: suff},
 together with the discussion of folding configurations.  Sections 
\ref{sec: cond}-\ref{sec:char}  then discuss the fine geometry of
shear-resistance configurations.

\section{Definition of angle-rigid configurations}\label{sec:z2}

 This section is devoted to introducing some  basic   notions  and
 notation,  leading to
the Definition~\ref{def: ar} of
 {\it angle-rigid $\Zz^2$ configurations}. In Section
\ref{sec:discrete} we collect some comments on the relation of this
notion with other rigidity  concepts  in graph theory.

\subsection{Basic notation and definition of  angle-rigid $\Zz^2$ configurations}\label{se:nota}

 Our analysis focuses on point configurations in three space
dimensions, namely  on  collections  $C = \{\bx_k \}_{k=1}^n$ of
$n$ distinct points in $\Rz^3$. Among these, we call {\it $\Zz^2$
  configurations} those with
$$ \bx_k \in \Zz^2=\{ (i,j,0) \ : \ i,\, j \in \Zz\}, \quad  \quad \text{ for all }   k\in \{1,...,n\},$$ 
see Figure \ref{fig:initial}, and indicate them as $C
\in\Zz^{2n}$ in the following. Note that  we  interpret
$\Zz^2$ as a proper subset of $\Rz^3$. In order to shorten notation,
whenever possible we refer to points in $\Zz^2$ by specifying their
first two integer coordinates only.  We call {\it axes} the subsets of $\Zz^2$ of the form
$\{i\} \times \Zz$ or $\Zz \times \{j\}$, for some  $i,\,j\in \Zz$.

To each configuration $C = \{\bx_k \}_{k=1}^n$ of
$n$ points in $\Rz^3$, we associate  the  segments joining 
pairs of points having distance smaller than $1+2\epsi$, for some
given  and fixed small parameter  $ 0 < \epsi \le  (\sqrt{2}-1)/4$. 
More precisely, we associate
to $C$ the set of  {\it neighbors} as specified by the pairs 
\begin{align}\label{eq: neighbor0}
N(C) =\{ (k,k')  \in  \{1,\dots,n\}^2\ \colon \  |\bx_{k} - \bx_{k'} | \le  1 + 2\eps  \}.
\end{align}
Note that each pair of neighbors is counted twice in
$N(C)$. If  $(k,{k'})  \in N(C)$,  
we call the straight segment   joining $\bx_k$ and $\bx_{k'}$ 
a   {\it bond}  and we say that the two points are {\it bonded}.
Bonds are  hence  identified with pairs $ (k,{k'}) \in
N(C)$, up to permutations.  For each
$\Zz^2$ configuration  one has that
$( k, {k'})\in N(C)$ iff  $\bx_k$ and $\bx_{k'}$ are at distance
$1$. In the following, we will use the notation $\bN(C)
=\{(\bx_k,\bx_{k'})\colon \, (k,k')\in N(C)\}$ to indicate the pairs of
bonded points.

We also indicate adjacent bonds by defining the  set of {\it triplets}  
\begin{align}\label{eq: neighbor000}
T(C) = \{( k, {k'}, {k''})  \in  \{1,\dots,n\}^3  \,
\colon \,  ( k, {k'}), \,
( {k'}, {k''}) \in N(C),  \  k \not =  {k''}\}.
\end{align}
To
each triplet  $ ( k, {k'}, {k''})  \in T(C)$, we uniquely
associate the angle $\theta(\bx_k,\bx_{k'},\bx_{k''})$ formed by the
 vectors 
$\bx_k-\bx_{k'}$ and $\bx_{k''}-\bx_{k'}$ and oriented clockwise.  Note that $\theta(\bx_k,\bx_{k'},\bx_{k''}) +  \theta(\bx_{k''},\bx_{k'},\bx_{k}) = 2\pi$. 
We use the notation $\bT(C)=\{(\bx_k,\bx_{k'},\bx_{k''}) \colon  \,
(k,k',k'') \in T(C)\}$ to indicate triplets of bonded points in the
configuration.

We call the  collection of all bonds of a configuration $C$
 the  {\it bond
  structure}  of $C$.  A  collection of pairwise  distinct  points $(\bx_1,\ldots, 
 \bx_m)$   in $C$   is called a \emph{path} if $ (  i, {i+1} )
  \in
N( C  )$ for $i=1,\ldots,m-1$. Moreover, we say $(\bx_1,\ldots,
\bx_m)$ is a \emph{cycle} if it is a path and $ (  m,  1
)  \in N( C  )$. By the Jordan curve theorem, each planar cycle  which is a closed loop without self-intersection   partitions
 its  support plane  into  an interior and an exterior.   We say
that a configuration  $C$ is {\it connected} if each two points of $C$ can be connected by a path.  In a similar fashion, we define
connected components of configurations.

  For $C \in \Zz^{2n}$, denoting by $B \subset \Rz^2$ the
 union of the bonds, i.e., the union of the  line segments between
 $\bx_k$ and $\bx_{k'}$ for  $( k, {k'}) \in N(C)$, we call the bounded connected components of the open set $\Rz^2 \setminus B$ the \emph{faces} of $C$. For each face $f$, we call the points  in $C$ on the boundary of the  face a \emph{cell}. More precisely, given $f$, we define $Z = \partial f \cap C$, and write $f(Z)$ to highlight that $f(Z)$ is the face associated to  the cell $Z$. The collection of all cells of $C$ is denoted by $\mathcal{Z}(C)$. Note that in general each point may be
 contained in different cells.

 Let now $\widetilde C=\{\tilde \bx_1, \dots
, \tilde \bx_n\} \in \Rz^{3n}$ be an {\it $\epsi$-perturbation} of the
configuration $C \in \Zz^{2n}$, namely,  $|\bx_k - \tilde \bx_k|
<\epsi$ for all $k\in \{1,...,n\}$. Owing to definition \eqref{eq: neighbor0}   and  $ 0 < \epsi \le  (\sqrt{2}-1)/4$,  we have that
$N(\widetilde C) = N(C)$ and  $T(\widetilde C) = T(C)$. In other
words, the topology of the  bond structure of $C$ and $\widetilde C$
are the same.    We can now pose the following.

\begin{definition}[Angle-rigid   $\Zz^2$
  configurations]\label{def: ar}
  We say that a configuration $C \in \Zz^{2n} $ is \emph{angle-rigid} 
  if there exists $\delta \in (0,\epsi)$ such that  any configuration $\widetilde C=\{\tilde \bx_1, \dots
, \tilde \bx_n\} \in \Rz^{3n}$ with $|\bx_k - \tilde \bx_k| <\delta$
for all $k=1, \dots, n$ and
 \begin{align*}
    &| \tilde \bx_k- \tilde  \bx_{k'}|=|\bx_k - \bx_{k'}|  \quad \forall
    (  k,  {k'})\in N(C)=N(\widetilde C),  \\
&\theta(\tilde \bx_k, \tilde \bx_{k'},
 \tilde \bx_{k''}) = \theta (\bx_k,\bx_{k'},\bx_{k''})\quad \forall
    ( k, {k'}, {k''})\in T(C)=T(\widetilde C), 
 \end{align*} 
 is congruent to $C$, namely, $|\tilde \bx_k -
  \tilde \bx_{k'}| = | \bx_k - \bx_{k'} |$ for all $
  k,\,k' =1,\dots,n$. 
\end{definition}

The above definition can be equivalently presented in more
mechanical terms.
Let us call a mapping $\bvarphi\colon   C
\to \Rz^{3n}$   \emph{angle-preserving}  if  $\tilde{C} := \lbrace \bvarphi(\bx_1),\ldots,\bvarphi(\bx_n)\rbrace$  is an $\epsi$-perturbation of $C$ and  
  \begin{align}
    &|\bvarphi(\bx_k)- \bvarphi(\bx_{k'})| = 1 \quad \forall
    ( k, {k'})\in N(C),\label{ar1}\\
&\theta(\bvarphi(\bx_k), \bvarphi(\bx_{k'}),
  \bvarphi(\bx_{k''})) = \theta (\bx_k,\bx_{k'},\bx_{k''})\quad \forall
    ( k, {k'}, {k''})\in T(C).\label{ar2}
  \end{align}
A configuration $C \in \Zz^{2n} $ is then angle-rigid
iff there exists $\delta \in (0,\epsi)$ such that all angle preserving
maps $\bvarphi$ with $|\bx_k - \bvarphi(\bx_k)|<\delta$ for all
$k=1,\dots,n$ are (restrictions to $C$ of)
isometries. 

 In the sequel, mappings $\bvarphi$ which are isometries (restricted to $C$)
will be called \emph{trivial}, otherwise \emph{nontrivial}. As already mentioned in the Introduction, angle-preserving mappings  that keep the points coplanar are trivial. Therefore, nontrivial angle-preserving mappings
necessarily have a nonplanar image.  The
focus of the  forthcoming  Sections \ref{sec: suff}--\ref{sec:char} is to discuss a  sufficient and a
conditional necessary condition  for  angle-rigidity 
which can be checked with limited  computational  effort.

\subsection{Relation with rigidity in graphs}\label{sec:discrete}
In  the  whole  paper, we  
use a notation adapted to our mechanical
application, by focusing on point configurations instead of the graph
structure of bonds.   This is in particular   tailored  to the check of strict
minimality of configurational energies, as discussed in Section
\ref{sec: appl} below.

Before moving on, we would like to mention, however, that our setting
is closely related to notions in graph-rigidity
theory \cite{Asimov,Laman,Lovasz}. In fact, the notation above corresponds to associating to each $\Zz^2$
configuration $C = \{\bx_k \}_{k=1}^n\in \Zz^{2n}$  the {\it unit-disk
   graph} $G=(V,E)$, namely, a graph with one vertex
 for each point in $C$, and with an edge between two vertices whenever
 the corresponding points in $C$ are at distance  $1$ \cite{Garam}.  In
 particular, the 
set of {\it edges} $E$ can be specified as
\begin{align*} 
E =\{( k, {k'}) \in  V\times V   \colon \,  |\bx_k-\bx_{k'}| =1  \}.
\end{align*}
 A {\it
   framework} is then a pair $(G,p)$, where $p : G \to \Rz^3$ is
 referred to as the
 {\it realization} of the graph $G$. We canonically associate to $C$ the framework $(G,p)$ given
 by   $p(k):= \bx_k$ for all $k\in V$. 
We may also indicate   {\it triplets} as 
$$T  = \{(k,{k'},{k''}) \in  V \times V \times V   \colon \, (k,{k'}), \,
({k'},{k''}) \in E,  \ k \not = {k''}\}$$
and associate to  
each triplet  $(k,{k'},{k''}) \in T$ the angle  $\theta(p(k), p({k'}),
  p({k''}))$  formed by the
 vectors   
$p(k)-p({k'})$ and $p({k''})-p({k'})$   and oriented clockwise.  We note that $E$ and $T$ coincide with $N(C)$ and $T(C)$ defined in \eqref{eq: neighbor0} and \eqref{eq: neighbor000}, respectively,  provided that we let  $V=\{1,...,n\}$.   


 A framework is said to be {\it rigid} if there
  exists $\delta>0$ such that all frameworks $(G,q)$ with
  $|p(k)-q(k)|<\delta$ for all $k \in V$ with the property
  \begin{align*}
    &|q(k)- q(k')| = |p(k)-p(k')|\quad \forall
      ( k, {k'})\in E
  \end{align*}
 are congruent to $(G,p)$, namely,
$|q(k)- q(k')| = |p(k)-p(k')|$ for all
$ ( k, {k'})\in V\times V$.  We refer the Reader  to \cite{Roth} for details about rigidity of frameworks.

Here, we add the restriction on the preservation of the angles and
call {\it angle-rigid} a framework $(G,p)$ if there exists
$\delta >0$ such that all frameworks $(G,q)$ with
  $|p(k)-q(k)|<\delta$ for all $k \in V$ with the properties
  \begin{align*}
    &|q(k)- q(k')| = |p(k)-p(k')|\quad \forall
      ( k, {k'})\in E,\\
&\theta(q(k), q({k'}),
  q({k''})) = \theta(p(k), p({k'}),
  p({k''}))\quad \forall
    (k,{k'},{k''})\in T, 
  \end{align*}
  are congruent to $(G,p)$.

  Given the above mentioned identifications,
  a $\Zz^2$ configuration $C$ is angle-rigid iff its associated
  framework $(G,p)$ is angle-rigid, so that these viewpoints are in
  fact equivalent. 
The graph theoretic notation may be more transparent in places. Still,
in the very specific situation considered   here,  namely, that of
$\Zz^2$ configurations, it seems to be adding an unnecessary degree of
abstraction. In fact, by  directly referring to graph-rigidity notions
would  complicate some of our
arguments, which are based on the direct manipulation of the underlying
configurations. We hence refrain from using graph-rigidity notions in
the following, focusing on point configurations instead.

 Let us mention that   rigidity  under angle preservation has recently attracted
attention. Applications arise from interacting systems of agents, as
in robotics and traffic flow \cite{Anderson,Oh}. Note that the term
{\it angle-rigid} has in fact already been used in \cite{Chen},
although in a different, purely planar context. See also
\cite{Buckley,Gangshang,Jing2,Jackson,Park,Saliola} for some other results in the plane, where angles come into
play.  In the specific case of molecular structures, the
celebrated {\it Molecular Conjecture} \cite{Tay} relates to the
possibility of investigating the rigidity of so-called {\it molecular
  graphs} \cite{Jackson2} by combinatorial methods.

Our notion of angle-rigidity is strictly related to that  of  
{\it body-and-hinge frameworks} \cite{Katoh}. These are collections of
$d$-dimensional rigid bodies connected by {\it hinges}, where a hinge
is identified with a $(d-2)$-dimensional affine subspace. Continuous motions of
the body-and-hinge framework are allowed under the constraints that
distances between two connected bodies are preserved and that their
relative motion is a rotation around the common
hinge. A body-and-hinge framework is said to be {\it rigid} if every such
admissible motion is an isometry. To each $\Zz^2$
configuration $\{\bx_1,\dots,\bx_n\}$ one can associate a
body-and-hinge framework by considering a small 3d sphere centered at
each $\{\bx_1,\dots,\bx_n\}$ and a hinge for each bond. Under this
identification, a $\Zz^2$
configuration is angle-rigid iff its associated   body-and-hinge
framework is rigid. Note however that the two notions are not
equivalent, for a  body-and-hinge
framework made of two hinged rigid bodies which are not rotationally
symmetric around the hinge is not rigid, whereas each $\Zz^2$
connected configuration made of two points is
angle-rigid. Combinatorial characterizations of the {\it infinitesimal} rigidity for
$d$-dimensional body-and-hinge frameworks are indeed available
\cite{Katoh}. These pave the ground for investigating the
angle-rigidity of molecules by well-developed  tree  algorithms on the
underlying graphs.

Our setting is here more restricted, for it exclusively applies to the
special class of $\Zz^2$ configurations.  Within this specific
setting, we are able to present a characterization of framework
rigidity. In comparison with \cite{Katoh}, our characterization has
the advantage of 
making the check of angle-rigidity straightforward, for the 
ensuing non-angle-rigidity sufficient  condition  \eqref{suff} is elementarily
implemented.  We expect that similar techniques can be applied to
other special classes, possibly including unit disk graphs generated
by subsets of the triangular or the hexagonal lattice.

\section{Strict  local minimality and angle-rigidity}\label{sec: appl}

 Before moving to the investigation of angle-rigidity for $\Zz^2$
configurations,  we comment here on the main application of our
theory, namely the characterization of locally stable point
configurations.

Assume to be given a {\it configurational energy} $\cE\colon \Rz^{3n}
\to  [0,+\infty]$ associating to each configuration  $ C  = \{\bx_k
\}_{k=1}^n\in \Rz^{3n}$ the scalar value 
 $\cE(C)$ given by
\begin{equation}\cE(  C  ) = \cE_2(  C  ) + \cE_3(  C  ) := \frac12 \sum_{N(C)} \vl(|\bx_k{-}\bx_{k'}|) + \frac12
\sum_{T(C)} \vt(\theta(\bx_k,\bx_{k'},\bx_{k''})).\label{eq:calE}
\end{equation}
The configurational energy is the sum of a {\it two-body} term $\cE_2$, depending solely on
the mutual distance of the points,
and a {\it three-body} contribution $\cE_3$ depending on angles instead. The two-body {interaction density}   $v_2\colon[0,\infty) \to
 [0,+\infty]$ is assumed to be strictly minimized at the
reference distance $1$  with $v_2(1)=0$. 
The three-body interaction energy $\cE_3$ is modulated  via the {three-body} interaction density $v_3 \colon
[0,2\pi] \to [0,\infty)$ which we assume to be symmetric around $\pi$
and  to  attain  its minimal value $0$ just at $\pi/2$, $\pi$, and
$3\pi/2$.   Note that the factors $1/2$ account for double counting
neighbors in $N(C)$ and triplets in $T(C)$.  Without claiming completeness, the Reader
is referred to the seminal  papers  \cite{Brenner,Stillinger-Weber85,Tersoff}
for the discussion of empirical potentials including three-body
contributions and to \cite{E, Farmer,Theil15, Mainini14,Mainini-Stefanelli12}
for some related mathematical crystallization results. The Reader is also
referred to \cite{Jacobs,Katoh,Whiteley2} for a collection of graph-theoretical results
of relevance in Molecular Mechanics.

 We   stress  that all $\mathbb{Z}^2$ configurations $C$ are global
 minimizers of the energy, as  one has $\cE(C)=0$.  We then ask ourselves if
 these configurations are  \emph{strict} local minimizers or not,
 where local minimality is understood with respect to the
 Euclidean norm in $\mathbb{R}^{3n}$.  Local minimality is indeed
 crucial in connection with applications, for it formalizes the
 concept  of  {\it stability} of discrete structures. 
 The relation of strict local minimality and angle-rigidity is given by the following.

\begin{proposition}[Angle-rigidity = strict local minimality]\label{prop:a}
  A  configuration is a strict local minimizer for the energy $\cE$ 
  up to isometries iff
  it is angle-rigid.
\end{proposition}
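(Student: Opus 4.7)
The plan is to prove the two implications separately, both built on two elementary observations about $\cE$: first, every $\Zz^2$ configuration $C$ satisfies $\cE(C)=0$, since all bonds have length $1$, every triplet angle lies in $\{\pi/2, \pi, 3\pi/2\}$, and $v_2, v_3 \ge 0$ by assumption; second, a perturbed configuration $\widetilde C$ sufficiently close to $C$ has vanishing energy if and only if its bonds have length $1$ and its triplet angles agree with the corresponding ones in $C$. This second observation is the bridge that identifies the energy-level set $\{\cE=0\}$ near $C$ with the set of length- and angle-preserving perturbations in Definition~\ref{def: ar}.

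For the implication angle-rigid $\Rightarrow$ strict local minimizer, I would start from an angle-rigid $C$ with rigidity parameter $\delta_0 \in (0,\epsi)$ and pick $\widetilde C$ with $|\bx_k - \tilde\bx_k|<\delta$ for a sufficiently small $\delta<\delta_0$. Since $\delta<\epsi$, one automatically has $N(\widetilde C)=N(C)$ and $T(\widetilde C)=T(C)$. Assuming $\cE(\widetilde C) \le \cE(C) = 0$, the non-negativity of $v_2$ and $v_3$ together with their zero sets force $|\tilde\bx_k - \tilde\bx_{k'}|=1$ on $N(C)$ and the triplet angles of $\widetilde C$ to lie in $\{\pi/2,\pi,3\pi/2\}$. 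Here I would invoke continuity of the angle function on nondegenerate triples: for $\delta$ small enough, each perturbed angle is within, say, $\pi/4$ of the corresponding angle in $C$, and since the three admissible values are separated by $\pi/2$, perturbed and original angles must coincide. Definition~\ref{def: ar} then yields congruence of $\widetilde C$ with $C$, so $C$ is a strict local minimizer up to isometries.

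For the reverse implication, I would assume $C$ is a strict local minimizer up to isometries with parameter $\delta_0$ and take any $\widetilde C$ with $|\bx_k - \tilde\bx_k|<\delta<\min\{\delta_0,\epsi\}$ satisfying both the length- and angle-preservation conditions of Definition~\ref{def: ar}. By construction, every summand in $\cE_2(\widetilde C)$ and $\cE_3(\widetilde C)$ vanishes (the length condition kills $v_2$ via $v_2(1)=0$, and the preserved angles lie in the zero set of $v_3$), so $\cE(\widetilde C)=0=\cE(C)$, and strict local minimality forces $\widetilde C$ to be congruent to $C$, proving angle-rigidity. The only step requiring any care is the continuity argument in the first direction, but this is essentially routine given the $\pi/2$ gap between admissible angles and the uniform continuity of $\theta$ on triples bounded away from collinearity; apart from that, the proposition amounts to a direct dictionary between the energetic and the geometric formulations made possible by the discrete zero sets of $v_2$ and $v_3$.
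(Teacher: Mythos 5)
Your proposal is correct and follows essentially the same route as the paper: both directions reduce to the observation that $\cE(\widetilde C)=0$ for a nearby perturbation iff all bond lengths are $1$ and all triplet angles lie in the zero set of $v_3$, which for small $\delta$ is equivalent to the length- and angle-preservation conditions of Definition~\ref{def: ar}. Your explicit continuity argument for the angles (using the $\pi/2$ separation of the admissible values) is left implicit in the paper's proof, but the substance is identical.
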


\begin{proof}
Let $C=\{\bx_k\}_{k=1}^n \in \Zz^{2n}$ be angle-rigid with given
$\delta \in (0,\epsi)$ and let 
$ \widetilde
C =\{\tilde \bx_1, \dots
, \tilde \bx_n\} \in \Rz^{3n}$ with $|\bx_k - \tilde \bx_k| <\delta$
for all $k=1, \dots, n$ such that $C$ and $\widetilde C$ are not
congruent. Hence, there exists either $( k, {k'})
\in N( C)$ such that $| \tilde \bx_k- \tilde  \bx_{k'}|\not
=1$ or there exists $( k,  {k'},
  {k''})\in T(\widetilde C)$ such that $\theta(\tilde \bx_k, \tilde \bx_{k'},
 \tilde \bx_{k''}) \not \in \{\pi/2,\pi,3\pi/2\}$ (or both). In all
 cases,  $ \cE(\widetilde C)  >0 = \cE(C)$,  which proves that $C$
is a strict local minimizer of $\cE$ in a $\delta$-neighborhood, up to isometries.  

Let now $C$ be not angle-rigid.  Then, for all $\delta<\epsi$
there exists $ \widetilde
C_\delta =\{\tilde \bx_1, \dots
, \tilde \bx_n\} \in \Rz^{3n}$ with $|\bx_k - \tilde \bx_k| <\delta$
for all $k=1, \dots, n$ such that $C$ and $\widetilde C_\delta$ are not
 congruent,  $| \tilde \bx_k- \tilde  \bx_{k'}|=1$ for all $(k,{k'})
\in N(\widetilde C_\delta)$,  and $\theta(\tilde \bx_k, \tilde \bx_{k'},
 \tilde \bx_{k''}) \in \{\pi/2,\pi,3\pi/2\}$ for all $(k,{k'},
  {k''})\in T(\widetilde C_\delta)$. Hence,    $\cE(\widetilde C_\delta) =
\cE(C)=0$  and   $C$ is not a strict local minimizer. 
\end{proof}

 We conclude this section by commenting on the relevance of this
stability result with respect to applications. Our setting is admittedly restricted, for the
configurational energy $\mathcal E$ solely features nearest-neighbors
interactions, together with some aspects of next-to-nearest-neighbors
interactions. In particular, long-range interactions are not
considered, which is surely a limitation. Note however that our energy is  tailored to
the modeling of ionic and covalent bonding at very low temperature. In
this setting,  the effect of long-range interactions on the structure
is usually negligible. This is for instance the case in honeycomb
carbon (sp$^2$-hybridization), where bonds range from 1.39$\pm$0.02,\AA\
for benzene to 1.42\,\AA\ for graphene, thus
differring by  2\%.

In addition, including long-range interactions in the model   
would contribute another source of rigidity, due to long-range
effects. We believe
that it would make the understanding of the interplay between geometry and
rigidity less transparent.
Let us moreover mention that rigorous crystallization results taking long-range
    interactions into account are scarce and usually very
    specific with respect to the choice of the interaction potentials. To the best of our knowledge, no long-range
    crystallization result for 2d cubic systems at zero temperature is
    available. On the contrary, such crystallization result is
    available in our nearest- and next-to-nearest-neighbors setting \cite{Mainini14}.

The choice of investigating $\Zz^2$ in 3d is also very specific, and
not directly related to any specific 2d atomic system. Our choice is here
motivated by simplicity. We expect that similar results could be
obtained for other crystalline
2d geometries, as well, including for instance the regular triangular lattice, the
regular hexagonal lattice, and the general Bravais lattice. This
would however call for some substantial notational and
technical changes, due to the distinct
symmetries of the 2d structures. 

Note again that the energy is degenerate, for $\mathcal E=0$ on {\it
  any} configuration in $\Zz^2$. This crude idealization is motivated
by our interest in the phenomenon of conformational isomerism \cite{Allinger}, where
bond-rotation phenomena occur. Here, energy barriers between
alternative configurations, although being not zero,
are usually small ($10^0\sim10^1$ kcal/mol) in comparison with ionic
bonding energy ($10^2\sim10^3$ kcal/mol) \cite{Liu}.

\section{Sufficient condition:  Combinatorial characterization of foldings}\label{sec: suff}

All disconnected  configurations  are obviously   not angle-rigid.  We
will hence concentrate on connected configurations from now on. 
Checking angle-rigidity for connected configurations  directly from the
definition may be very demanding. We are  thus  interested in reducing
the complexity of the problem by presenting a more tractable condition
implying  non-angle-rigidity.  Recall that we call axes the
sets  
$\{i\} \times \Zz\subset \Zz^2$ or $\Zz \times \{i\}\subset \Zz^2$, for
some  $i\in \Zz$.   In this section we prove the following sufficient condition.   
\begin{proposition}[Sufficient condition]\label{prop: suff}
Let $C \in \Zz^{2n}$ be connected and fulfill the following condition:
\begin{align}
  &\text{there exists an axis $A$ such that, by removing from $C$
  all points on $A$ having}\nonumber\\
&\text{at most one neighbor off
  $A$, the resulting configuration is disconnected.}\label{suff}
\end{align}
Then, $C$ is   not angle-rigid.  
\end{proposition}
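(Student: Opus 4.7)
The plan is to construct, for every $\delta\in(0,\epsi)$, a nontrivial angle-preserving map $\bvarphi\colon C\to\Rz^3$ satisfying $|\bvarphi(\bx)-\bx|<\delta$ for all $\bx\in C$; by Definition~\ref{def: ar} this will prove that $C$ is not angle-rigid. The idea is to fold one piece of the disconnected configuration about the axis $A$. Let $S$ denote the set of points of $C$ on $A$ having at most one neighbor off $A$, and set $C':=C\setminus S$, which is disconnected by hypothesis. Write $C'=C'_1\sqcup C'_2$ with both $C'_i$ nonempty and no bond of $N(C)$ running between them. Two preliminary observations are in order. First, each $C'_i$ contains off-axis points: if $C'_i$ contains a hinge $h\in A\cap C'$, then $h$ has by definition two off-axis neighbors $h\pm(0,1)$ which, being adjacent to $h$, belong to the same component $C'_i$ as $h$; otherwise $C'_i\cap A=\emptyset$ and $C'_i$ is entirely off-axis. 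Second, every neighbor in $C$ of a point of $C'_i$ belongs to $C'_i\cup S$, since a bond to $C'_j$ with $j\neq i$ would contradict the splitting.

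Taking $A$ as the $x$-axis and writing $R_\alpha$ for the rotation of $\Rz^3$ about $A$ by a small angle $\alpha$, I define
\begin{align*}
\bvarphi(\bx)=\bx \text{ for } \bx\in C'_1\cup S, \qquad \bvarphi(\bx)=R_\alpha(\bx) \text{ for } \bx\in C'_2.
\end{align*}
Since $R_\alpha$ fixes $A$ pointwise, this is consistent for on-axis points of $C'_2$, and $|\bvarphi(\bx)-\bx|\le 2\max_k|y_k|\sin(|\alpha|/2)$, so $|\alpha|$ small yields the required bound. Bond lengths are preserved: each bond has its two endpoints either in $C'_1\cup S$ (both fixed) or in $C'_2\cup S$ (both transported by the isometry $R_\alpha$, which fixes $S\subset A$), the case of a bond between $C'_1$ and $C'_2$ being excluded by the splitting. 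Angle preservation is checked on the vertex $\bx_{k'}$ of each triplet: if $\bx_{k'}\in C'_1$ all three points are fixed; if $\bx_{k'}\in C'_2$ all three lie in $C'_2\cup S$ and are transported by the single orthogonal transformation $R_\alpha$ (which fixes $S$); if $\bx_{k'}\in S$, the vertex itself is fixed and has at most one off-axis neighbor, so each angle at $\bx_{k'}$ either involves two on-axis vectors along $A$ (giving the preserved value $\pi$) or one on-axis and one off-axis vector. In the latter subcase the vector from $\bx_{k'}$ to the off-axis neighbor lies in the plane perpendicular to $A$ through $\bx_{k'}$ both before and after the fold---regardless of whether the off-axis partner sits in $C'_1$ (and is fixed) or in $C'_2$ (and is rotated within that perpendicular plane)---so it remains orthogonal to the vector along $A$ and the angle $\pi/2$ is preserved.

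It remains to argue that, for $\alpha\neq 0$, the map $\bvarphi$ is nontrivial, i.e., that $\bvarphi(C)$ is not congruent to $C$. I would first show $|C\cap A|\ge 2$: if $C\cap A=\emptyset$ then $S=\emptyset$ and $C'=C$ is connected, and if $C\cap A=\{p\}$ then either $p$ is a hinge (so again $S=\emptyset$) or $p\in S$ is a leaf of $C$ whose removal keeps $C$ connected, both contradicting disconnectedness of $C'$. Since all on-axis points are fixed by $\bvarphi$, the image $\bvarphi(C)$ contains two distinct points on $A$, one off-axis point of $C'_1$ at $z=0$, and one off-axis point of $C'_2$ at $z=y\sin\alpha\neq 0$; the first three are non-collinear in the plane $P:=\Rz^2\times\{0\}$, so the fourth is not coplanar with them. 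Hence $\bvarphi(C)$ is not contained in any plane, while $C\subset P$ is planar, so $\bvarphi(C)$ cannot be an isometric (hence congruent) image of $C$. The main obstacle in implementing this plan is the case analysis for angle preservation, especially at points of $S$ whose off-axis neighbor lies in the rotated part $C'_2$, where one must carefully track perpendicularity to $A$.
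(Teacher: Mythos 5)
Your construction coincides with the paper's: condition \eqref{suff} is shown there to produce a \emph{folding} (Definition~\ref{def:s}), and the map that rotates one piece of the pruned configuration about $A$ while fixing the rest is exactly the paper's $\bvarphi$ in Step~1 of the proof of Proposition~\ref{prop: F}. The only differences are organizational — you verify \eqref{ar1}--\eqref{ar2} by a direct case analysis on the apex of each triplet, where the paper reintegrates the removed on-axis points one at a time, and you spell out the nontriviality via a planarity argument that the paper leaves implicit — so this is essentially the same proof.
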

 Condition \eqref{suff} is illustrated in Figure \ref{fig:added}. 
 Checking  condition \eqref{suff} for some general configuration  $C \in
\Zz^{2n}$  can be  accomplished very efficiently. Indeed, one has to
consider all axes intersecting $C$ (at most  $2n$),  remove points
according to \eqref{suff}, and check for connectedness of the
resulting configuration.  
\begin{figure}[ht]
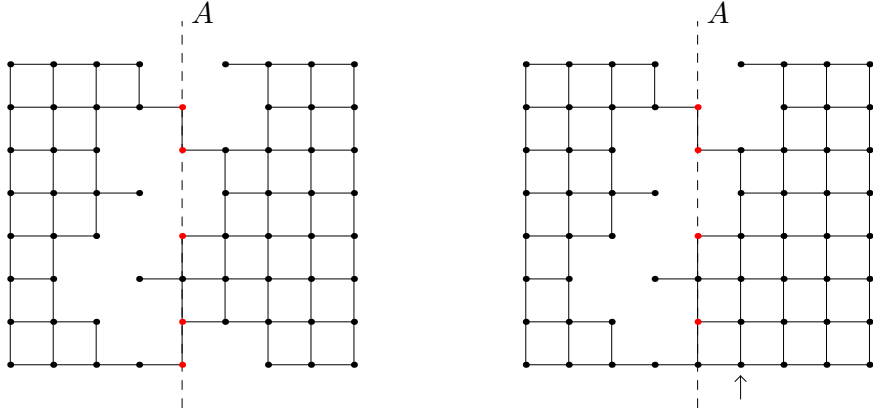

\centering
\pgfdeclareimage[width=115mm]{added}{added2}  
\pgfuseimage{added} 
\caption{ Illustration of condition \eqref{suff}. In both
  configurations, points on the axis $A$ having at most one
  neighbor off $A$ are highlighted. Upon removing them, the
  left configuration  becomes disconnected (i.e., condition
  \eqref{suff} holds) whereas the right configuration is still connected (condition
  \eqref{suff}  does not hold, respectively). Note that the two
  configurations differ by the  point  indicated by the arrow.}
\label{fig:added}
\end{figure}

 The proof of Proposition \ref{prop: suff} relies on an equivalent geometric characterization of condition \eqref{suff} in terms of a specific class of configurations, so-called {\it foldings} $\calF$. 

\begin{definition}[Folding]\label{def:s}
  We say that a configuration $C  \in   \Zz^{2n} $ is  a  \emph{folding},
  and we write $C\in \calF$, if there exists an axis $A$ and a
  subconfiguration $\widetilde C \subset C$ such that both $\widetilde
  C\setminus A$ and $(C \setminus \widetilde C)\setminus A$ are not empty and, letting  $R_\rho$  be
  a rotation about $A$ of amplitude  $\rho$  with  $ \rho  \in
  (0,\pi/2)$  arbitrarily   small,  the map   $\bvarphi\colon  C \to \Rz^{3n}$  defined as 
$$\bvarphi(\bx) = 
\left\{
  \begin{array}{ll}
    R_\rho(\bx)   \quad&\text{if} \ \bx \in \widetilde C\\
\bx &\text{if} \ \bx \in C\setminus \widetilde C
  \end{array}
\right.
$$
 is angle-preserving, i.e., fulfills \eqref{ar1}--\eqref{ar2}. 
\end{definition}

 We have the following characterization.  

\begin{proposition}[Characterization]\label{prop: F} $C
  \in \calF$ $\Leftrightarrow$ \eqref{suff} holds.
\end{proposition}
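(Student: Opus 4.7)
The plan is to prove both implications by directly analyzing how distances and angles transform under the map $\bvarphi$.

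For \eqref{suff} $\Rightarrow$ $C\in\calF$, I take the axis $A$ provided by \eqref{suff} and denote by $P$ the set of peelable $A$-points. By assumption, $C' := C \setminus P$ splits into connected components $D_1, \dots, D_m$ with $m\geq 2$, and each $D_i$ must contain an off-$A$ point, since any non-peelable $A$-point of $D_i$ drags its two off-$A$ neighbors into $D_i$. Setting $\widetilde C := D_1$, both $\widetilde C \setminus A$ and $(C \setminus \widetilde C) \setminus A$ are nonempty. I then verify \eqref{ar1}--\eqref{ar2} using the elementary identity
\[
|R_\rho \bx - \bx'|^2 - |\bx - \bx'|^2 \,=\, 2\,ii'\,(1 - \cos\rho)
\]
(writing $\bx = (i,j,0)$, $\bx' = (i',j',0)$ and taking $A$ to be the $y$-axis), which guarantees bond-length preservation exactly when $\bx$ or $\bx'$ lies on $A$. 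Bonds between two off-$A$ points cannot cross the partition, because they are edges of $C'$ and therefore have both endpoints in the same $D_i$. For angles, at an off-$A$ vertex $\bx'\in\widetilde C$ every leg vector equals $R_\rho(\bx - \bx')$ (using $R_\rho \bx = \bx$ when $\bx \in A$), so all angles are preserved by the isometry $R_\rho$; at an $A$-vertex, a short case analysis on the pairs of legs (two along $A$; one along and one perpendicular to $A$; two perpendicular to $A$) shows preservation, with the last case using that both off-$A$ neighbors of a non-peelable $A$-vertex belong to the same $D_i$ by construction.

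For the converse $C\in\calF$ $\Rightarrow$ \eqref{suff}, I start from a folding with axis $A$ and subconfiguration $\widetilde C$, and extract two structural facts. First, by the same identity above, no bond of $C$ can connect $\widetilde C \setminus A$ to $(C \setminus \widetilde C) \setminus A$, as otherwise $ii'\ne 0$ and bond-length preservation fails. Second, at every non-peelable $A$-point, the two off-$A$ neighbors lie on the same side of the partition: if they were split, a direct computation shows that the angle $\pi$ formed by them would become $\pi - \rho$, contradicting \eqref{ar2}. Combining these two facts, any path in $C' := C \setminus P$ preserves the side, because edges between two off-$A$ points preserve it by the first fact and traversing a non-peelable $A$-point preserves it by the second. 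Since $\widetilde C \setminus A$ and $(C \setminus \widetilde C) \setminus A$ are nonempty and lie on different sides, they sit in different components of $C'$, so $C'$ is disconnected and \eqref{suff} holds.

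The main obstacle is the angle bookkeeping at non-peelable $A$-vertices, which is precisely the place where the peelability criterion enters the argument: it must simultaneously force the no-split property in the forward direction and support the consistency of the side assignment along paths of $C'$ in the converse direction. All remaining angle checks reduce to the displayed rotational identity and to the commutation of $R_\rho$ with differencing at off-$A$ vertices.
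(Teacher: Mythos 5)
Your forward implication ($\eqref{suff}\Rightarrow C\in\calF$) is correct and is essentially the paper's Step~1 in a cleaner algebraic packaging: the identity $|R_\rho\bx-\bx'|^2-|\bx-\bx'|^2=2ii'(1-\cos\rho)$ replaces the paper's sequential reintegration of the peeled axis points, and your three-way case analysis of triplets at axis vertices covers exactly the angles the paper checks one at a time. The observations that every component $D_i$ contains an off-$A$ point and that the two off-$A$ neighbors of a non-peelable axis point lie in a common component are precisely where \eqref{suff} enters, and you use them correctly. Your two ``structural facts'' in the converse are likewise exactly the two contradictions the paper derives (a stretched bond between off-axis points on different sides of the partition, and an angle $\pi$ turning into $\pi-\rho$ at a non-peelable axis point whose off-axis neighbors are split).

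The gap is in the converse, in the sentence ``any path in $C'$ preserves the side, because edges between two off-$A$ points preserve it by the first fact and traversing a non-peelable $A$-point preserves it by the second.'' These two local mechanisms do not compose when the connecting path contains a run of \emph{two or more consecutive} axis points, say $p,v_1,\dots,v_r,q$ with $p,q$ off $A$, all $v_i\in A$, and $r\ge 2$: fact~2 at $v_1$ only relates $p$ to the other off-$A$ neighbor of $v_1$, and nothing you state links the off-$A$ neighbors of $v_1$ to those of $v_r$, so the side of $q$ is not determined. The missing (and easily supplied) observation is the one the paper encodes in its explicit ladder subpath $\{(0,1),(0,0),\dots,(k,0),(k,\pm1)\}$: since each $v_i$ survives the peeling, both of its off-$A$ neighbors belong to $C$, and the neighbors of $v_i$ and $v_{i+1}$ lying on the same side of $A$ are themselves bonded off-$A$ points; hence your fact~1 propagates the side along this parallel row, while fact~2 transfers it across the axis at $v_1$ and at $v_r$. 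With that one additional step your converse closes and coincides with the paper's argument.
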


In other words,  condition \eqref{suff} can be understood as a combinatorial characterization of $\mathcal{F}$. Of course, all foldings are   not angle-rigid   since  the map
 $\bvarphi$  from Definition~\ref{def:s}   is    angle-preserving  and nontrivial.  Consequently, Proposition \ref{prop: F} immediately implies Proposition \ref{prop: suff}. We now prove Proposition \ref{prop: F}.

\begin{proof}
 \noindent \emph{Step 1: Implication \eqref{suff} $\Rightarrow$ $C
  \in \calF$}.   Let $C$ be connected and fulfill \eqref{suff}. Call $\widetilde C$ the
  configuration obtained by removing all points of $C$ belonging to
  $A$ and having at most one neighbor off $A$. Fix  a connected component  $C_1\subset \widetilde
  C $   of $ \widetilde
  C $.  Then,   $C_1$ and $\widetilde C\setminus C_1$ are 
  disconnected. Let  $R_\rho$   represent a rotation
  around the axis $A$ of amplitude  $\rho$,   with $ \rho \in
  (0,\pi/2)$  small,  and define the
  mapping  $\bvarphi\colon C \to \Rz^{3n}$   as
  \begin{align*}
    \bvarphi(\bx)=
\left\{
    \begin{array}{ll}
      R_\rho  (\bx)  \quad&\text{if} \ \ \bx \in C_1\\
\bx & \text{if} \ \ \bx \in C\setminus C_1.
    \end{array}
\right.
  \end{align*}
As $C_1$ and $\widetilde C\setminus C_1$ are 
  disconnected, the map $\bvarphi$ fulfills \eqref{ar1}--\eqref{ar2} when
  restricted to $ \widetilde C$.  

We aim at showing that, by reintegrating the points
  in $C\setminus \widetilde C$ which were removed under condition \eqref{suff}, relations
  \eqref{ar1}--\eqref{ar2} still hold.  This will show that
$\bvarphi$  is angle-preserving, and therefore $C \in \mathcal{F}$.   We proceed sequentially,
  by adding one point at a time. To this aim, assume to be given a configuration $\haz C $ with 
$\widetilde C \subset \haz C \subset C$ such that \eqref{ar1}--\eqref{ar2} hold for
  $\haz C$ but $\haz C \not = C$. Without loss of generality, we let  $A=\Zz \times \{0\}$ and 
$(0,0)\in C\setminus \haz C$. We now check that, by adding
the point $(0,0)$ to $\haz C$, properties \eqref{ar1}--\eqref{ar2}
still hold true. To prove this, we
need to check that the new bonds and   new  angles that have to be included 
by considering  the extra point $(0,0)$ are still invariant under
  $\bvarphi$.   Such new bonds and angles are determined by
the  pairs 
and the triplets 
$$\bN':=\bN(\haz C \cup (0,0))\setminus \bN(\haz C) \ \ \text{and}\ \
\bT':=\bT(\haz C \cup (0,0))\setminus \bT(\haz C),$$
respectively. (We refer to Section \ref{se:nota} for the definition of $\bN$ and $\bT$.) 

 In view of condition \eqref{suff},   as $(0,0) \not \in \widetilde C$, we have that at least one  of the points $(0,1)$ and $(0,-1)$  does not belong to  $C$, and thus does not belong to  $\haz C$. In case both do not
belong to $\haz C$,  the new bonds  $\bN'$ 
are necessarily aligned with $A$, namely 
\begin{equation}
{ \bN'  } \subset \big\{((0,0),(1,0)),\, ((1,0),(0,0)), \, ((0,0),(-1,0)),\,
((-1,0),(0,0)) \big\}.\label{aligned}
\end{equation}
Since   $\bvarphi$  is the identity on $A$, we have that $|\bvarphi(\bx_k) -
\bvarphi(\bx_{k'})|=1$ for all  $(\bx_k,\bx_{k'})\in \bN'$  as well. As
regards triplets, we have that 
\begin{equation}(\bx_k,\bx_{k'},\bx_{k''})\in  \bT'  \ \Rightarrow \ \text{at least two
  out of $ \lbrace \bx_k,  \bx_{k'}, \bx_{k''}\rbrace$ belong to $A$}. \label{aligned2}
\end{equation}
As  $\bvarphi$  is the identity on $A$ and either the identity or a
rotation about $A$ out of $A$, one has that $\theta (\bvarphi(\bx_k),
\bvarphi(\bx_{k'}), \bvarphi(\bx_{k''}))=\theta
(\bx_k,\bx_{k'},\bx_{k''})$ for all $(\bx_k,\bx_{k'},\bx_{k''})\in
 \bT'$. In particular,   $\bvarphi$  fulfills  properties
\eqref{ar1}--\eqref{ar2}  on   $\haz C\cup (0,0)$.  

Let us now consider the case $  (0,1)  \in \haz C$ (the remaining case
$ (0,-1)  \in \haz C$ can be treated analogously). Here, all bonds in
 $\bN'$  are either aligned with $A$ as in \eqref{aligned}, or
orthogonal to $A$. In both cases, as  $\bvarphi$  is the identity on $A$ and either the identity or a
rotation about $A$ out of $A$, one has that  $|\bvarphi(\bx_k) -
\bvarphi(\bx_{k'})|=1$ for all $(\bx_k,\bx_{k'})\in  \bN'$ as well. Those
triplets in  $\bT'$  containing at least two points on $A$ can be treated
as before, see \eqref{aligned2}. We are just left to consider the
triplets of the form
$$ \big((0,0),(0,1),(0,2)\big), \ \big((0,0),(0,1),(1,1)\big), \
\big((0,0),(0,1),(-1,1)\big),$$
(and permutations $(\bx_k,\bx_{k'},\bx_{k''}) \mapsto
(\bx_{k''},\bx_{k'},\bx_{k})$), if at all included in  $\bT'$.   Let us
assume that 
$((0,0),(0,1),(1,1))\in  \bT'$. Then, $((0,1),(1,1))\in  \bN(C)$, implying that
$(0,1)$ and $(1,1)$ belong to the same connected component of
$\widetilde C$. In case $(0,1), \, (1,1) \in \widetilde C \setminus
C_1$ we have that  $\bvarphi$  is the identity on $(0,0)$, $(0,1)$, and
$(1,1)$. In case $(0,1), \, (1,1) \in 
C_1$ we have that  $\bvarphi$  is a rotation  about $A$  on $(0,0)$, $(0,1)$, and
$(1,1)$. In both cases,  $\bvarphi$  is an isometry on $(0,0)$, $(0,1)$, and
$(1,1)$, implying that 
$$\theta (\bvarphi(0,0),
\bvarphi(0,1), \bvarphi(1,1))=\theta
((0,0),(0,1),(1,1)).$$ 
An analogous argument applies to
$((0,0),(0,1),(0,2)) $ and
$((0,0),(0,1),(-1,1)) $.  Thus,  we have
proved that  $\theta (\bvarphi(\bx_k),
\bvarphi(\bx_{k'}), \bvarphi(\bx_{k''}))=\theta
(\bx_k,\bx_{k'},\bx_{k''})$ for all $(\bx_k,\bx_{k'},\bx_{k''})\in
 \bT'$. Eventually, also in case $  (0,1)  \in \haz C$,  the map
 $\bvarphi$  fulfills  properties \eqref{ar1}--\eqref{ar2}  
 on    $\haz C\cup (0,0)$. 

 In summary, we have shown  that   the map
 $\bvarphi$  fulfills  properties \eqref{ar1}--\eqref{ar2}  
on  $\haz C\cup (0,0)\subset C$.   In case $\haz C\cup (0,0)\not = C$,
we repeat the argument by adding a point from  $C\setminus (\haz C\cup
(0,0))$  to $\haz C\cup (0,0)$,  making sure that $\bvarphi$
preserves  properties
\eqref{ar1}--\eqref{ar2}. Since the number of points in $C$ is finite,
this procedure eventually terminates, proving that  $\bvarphi$
 fulfills
\eqref{ar1}--\eqref{ar2}  on $C$.   This shows that
$\bvarphi$  is angle-preserving, and therefore $C \in \mathcal{F}$.

   \noindent \emph{Step 2: Implication $C
  \in \calF$  $\Rightarrow$  \eqref{suff}.}   Consider  $C\in \calF$. Let  $A$ and $\widetilde C$ be the
axis and the subconfiguration from Definition~\ref{def:s}. Without
loss of generality, possibly by
changing coordinates,  we can assume that  $A=\Zz\times \{0\}$.

 The  mapping  $\bvarphi$   from Definition~\ref{def:s}  does not preserve the length of bonds  which  connect
points in $\widetilde C$ and $C\setminus \widetilde C$  and do not intersect the axis   $A$. This implies that $\widetilde C$ and $C\setminus \widetilde C$ are 
connected just through bonds intersecting   $A$.

We would like to prove that condition \eqref{suff} holds with this
same axis $A$. In order to do this, we shall prove that by removing from $C$
  all points on $A$ having at most one neighbor off
  $A$, the resulting configuration $\haz C$ is disconnected.

Assume by contradiction that $\haz C$ is connected. The two sets  $\widetilde
  C\setminus A$ and $(C \setminus \widetilde C)\setminus A$ are nonempty and disjoint. Take a path  in  $\haz C$   connecting a point in $\widetilde
  C\setminus A$ to a point in $(C \setminus \widetilde C)\setminus
  A$. As we have already  seen  that such path should contain bonds
   intersecting  $A$, with no loss of generality, again by possibly redefining coordinates,
  we can find a path of the form
$$\{(0,1), (0,0), \dots, (k,0),( k,  \pm 1)\}\subset \haz C$$
where $k\in  \Nz  \cup \lbrace 0 \rbrace $ is given, $(0,1)\in \widetilde C$ and $( k, \pm 1) 
\in C \setminus \widetilde C$  (here, it is intended that either
$(k,1)$ or $(k,-1)$ belong to the path,  and that for $k=0$ it is necessarily $(0,-1)$).  As all points $(0,0), \dots, (k,0)$
belong to $\haz C$ after the removal described in \eqref{suff}, one has that all points
$ (0,\pm 1), \dots, (k,\pm 1) $ belong to $C$ as well.  We first consider the case that $(k,1)\in C \setminus \widetilde C$ is contained in the path. Then,  as $(0,1)\in \widetilde
C$, we find $0 \le l \le k-1$ such that $ (l, 1) \in  \widetilde
C$ and $(l+1,1) \in  C \setminus \widetilde C$.  Hence, we have that
$${|\bvarphi  (l,1)   -\bvarphi (l+1,1) | = |  R_\rho(l,1)  - (l+1,1)| >1}$$
contradicting \eqref{ar1}.   On the contrary,  suppose that $(k,-1)\in C \setminus \widetilde C$ is contained in the path. If $(k,1) \in C \setminus \widetilde C$, we find a contradiction as before. Thus, we can assume that $(k,1) \in \widetilde C$. But then
$${\theta\big(\bvarphi  (k,1), \bvarphi  (k,0), \bvarphi  (k,-1)\big) =  \theta\big(R_\rho(k,1),  (k,0), (k,-1)\big) \neq \pi}$$
contradicting \eqref{ar2}.  We conclude that $\haz C$ is necessarily disconnected
and  thus  condition  \eqref{suff} holds.
\end{proof}

\section{Necessary condition: shear-resistant configurations $\calS_k$}\label{sec: cond}

 In the previous section, we have seen that the sufficient condition \eqref{suff} is also necessary for non-angle-rigidity when restricting to the subclass of foldings. Let us point out, however, that   \eqref{suff} is not
necessary in general,  as the example in Figure
\ref{fig:cookie} shows. 
  
\begin{figure}[ht]
\centering
\pgfdeclareimage[width=25mm]{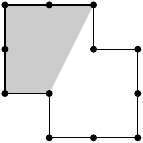}{cookie}  
\pgfuseimage{cookie} 
\caption{ A  not  angle-rigid  configuration not fulfilling
  \eqref{suff}. A nontrivial  angle-preserving  mapping 
  $\bvarphi$   is given by  $\bvarphi(\bx)=\bx$   at  the
boundary of the shaded region and $\bvarphi(\bx)=\bx+ (0,1-\cos
t,\sin t)  $ elsewhere,  for  $t>0$  small.  
}
\label{fig:cookie}
\end{figure}

  The remainder of the paper is   devoted to  another class 
of configurations whose angle-rigidity can be characterized via
\eqref{suff}. These will be called $k$-{\it shear-resistant}, in
coordination with an integer $k\in \Nz_0:=\Nz\cup\{0\}$, and will be
denoted by $\calS_k$. 

Recall that, to  each connected configuration $C
\in \Zz^{2n}$, we can associate its bond  structure.  In particular, from  Section \ref{se:nota}  we recall the definition of the cells $Z \in \mathcal{Z}(C)$ of a configuration $C$, and the corresponding faces $f(Z)$.  In the following, we say that a bond is \emph{acyclic} if it is not contained in any simple cycle of the bond graph, i.e., in a closed loop without self-intersection.

We start by noting that acyclic bonds of a cell may  influence 
angle-rigidity.   More precisely, a configuration with acyclic bonds
may    not be  angle-rigid  but become
angle-rigid, when all acyclic bonds are removed, see the examples in
Figure \ref{fig:acyclic}.

\begin{figure}[h]
\centering
\pgfdeclareimage[width=135mm]{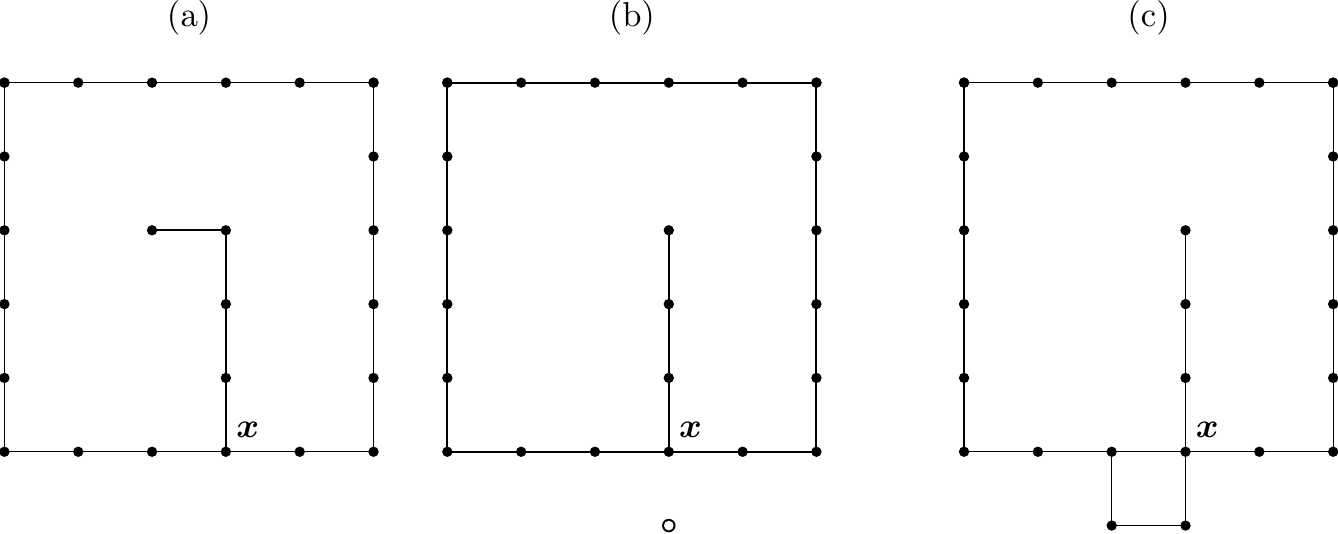}{acyclic}  
\pgfuseimage{acyclic} 
\caption{Illustration of   not angle-rigid  configurations. Configurations (a)-(b) get angle-rigid
  when acyclic bonds within the cells are removed, whereas   
  configurations (c)-(d) remain    not angle-rigid upon removal of
  the acyclic bonds. }
\label{fig:acyclic}
\end{figure}

In fact, the influence of acyclic bonds on  angle-rigidity 
can be completely characterized in terms of the class $\calF$, i.e.,
by condition \eqref{suff}.  To this end, we consider a single connected components of acyclic bonds intersecting a simple cycle in $\bx$.   With reference to  Figure   \ref{fig:acyclic},
we have that:  (a) If  there does not exist an axis containing all acyclic bonds, then condition \eqref{suff} holds for the vertical axis containing $\bx$,  and the configuration is
not angle-rigid; (b) If acyclic bonds are aligned on an axis and $\bx$
has only one bond on this axis,  condition  \eqref{suff}
holds for the horizontal axis containing the atom $\bx$; (c) If $\bx$
has two bonds on the axis,  the  angle-rigidity of the  configuration
is not affected by removing the acyclic bonds.  Indeed, in view of
\eqref{ar2}, any angle-preserving mapping on the configuration
obtained by removing the acyclic bonds can be uniquely extended to the
original configuration; (d) If the acyclic bond connects two cycles,
then condition \eqref{suff} holds for the horizontal axis containing
$\boldsymbol y$,  and the configuration is
not angle-rigid.      Note that  (a)--(d)  exhaust all possibilities  (up to multiple connected components of acyclic bonds). 
As we eventually aim at assessing if a configuration lies in $\mathcal{F} \cup \mathcal{S}_k$, we hence simplify the presentation from now on and assume with no further mention that  there are no  acyclic bonds.

Let $d$ denote the classical  Manhattan  distance  in
$\Zz^2$, namely  $$d (\bx,\by) =
|\bx_1-\by_1| + |\bx_2-\by_2|$$ for $ \bx =  (x_1,x_2),\,\by =
(y_1,y_2) \in \mathbb{Z}^2$.     Moreover, 
let $d(\bx,A) = \inf_{\by \in A}
d (\bx,\by)$ for $\bx \in \mathbb{Z}^2$ and $A \subset
\mathbb{Z}^2$   nonempty.

Given  the cell  $Z \in \mathcal{Z}(C)$ and $k \in
\Nz_0$, we define the corresponding
\emph{$k$-cell} $Z^{(k)}$    by  
\begin{align*}
Z^{(k)}:=  \{ \bx \in C \ | \ d( \bx,  Z)\leq k\}.
\end{align*}

 For each $k \in \Nz_0$, the collection of all $k$-cells
$Z^{(k)}$ for $Z \in \mathcal{Z}(C)$ is denoted by
$\mathcal{Z}^{(k)}(C)$.    Note that $Z=Z^{(0)}$, so that  
cells are also called  $0$-cells  in the following.

%

Four points $\bx_1, \bx_2, \bx_3,\bx_4 \in \Zz^2$ are said
to form a {\it paraxial} rectangle if  they are vertices of a
rectangle in $\Zz^2$ with sides aligned to axes in $\Zz^2$: namely, if
$\bx_1, \bx_2, \bx_3,\bx_4$ can be obtained by translating via
$\boldsymbol z \in \Zz^2$ the points 
$$(0,0), \ (a,0), \ (0,b), \ (a,b) \quad \text{for some} \ a,\, b \in
\Nz  \quad \text{with $\min\lbrace a,b \rbrace =1$}.$$ 
 We are now ready to define $k$-shear-resistant configurations.

\begin{definition}[$k$-shear-resistant configurations]\label{def: shear}
Let $C \in \Zz^{2n}$ be connected and let $k \in  \Nz_0$.    

(i) We say that a cell $Z \in \mathcal{Z}(C)$ is \emph{$k$-shear-resistant} if, letting $Z^{(k)} \in \mathcal{Z}^{(k)}(C)$ be
the corresponding $k$-cell, each   angle-preserving   mapping $\bvarphi\colon
 Z^{(k)} \to \Rz^{3\#Z^{(k)}}$  has the following property: if two pairs
$(\bx_i,\bx_{i'}), (\bx_j,\bx_{j'}) \in  \bN(Z)$ of pairwise
distinct points form a paraxial rectangle whose interior lies
entirely in $f(Z)$, then $$\bvarphi(\bx_i), \bvarphi(\bx_{i'}),
\bvarphi(\bx_{j}), \bvarphi(\bx_{j'})$$ are  coplanar.   

(ii) We  say that the configuration $C$ is 
\emph{$k$-shear-resistant},   and we write $C \in \calS_k$,   if all   its   cells are $k$-shear-resistant.  
\end{definition}

 The above definition  implies that $\calS_k \subset
\calS_{k'}$ if $k\leq k'$,  for the set of  nontrivial 
angle-preserving  mappings  of $Z^{(k)}$  decreases when $k$ increases.  Moreover, every angle-rigid configuration consisting of $n$ points lies in $\calS_n$.   
In Figure \ref{fig:three} we present examples  and non-examples of $k$-shear-resistant configurations for various
values of $k$.  
\begin{figure}[ht]
\centering
\pgfdeclareimage[width=145mm]{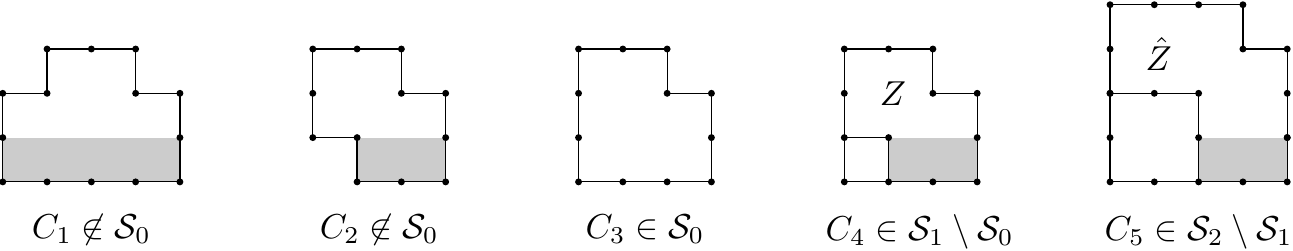}{three}  
\pgfuseimage{three} 
\caption{Examples of configurations  which are (not) shear-resistant,
  cf.~Section \ref{sec:char}  below.}
\label{fig:three}
\end{figure}

 We emphasize that, for a cell $Z$, the property of being $k$-shear resistant
depends \emph{not only on $Z$ itself but also} on the configuration $C$. In particular, the same cell may be
$k$-shear resistant as part of one configuration and not $k$-shear-resistant as part of another.   Following the discussion of Figure \ref{fig:cookie}, one has that
  the configuration   $C_2$   in Figure~\ref{fig:three}  is  not $0$-shear-resistant. Also $C_1$   in Figure~\ref{fig:three} is not  $0$-shear-resistant.  Indeed,  in  Figure~\ref{fig:cookie} we have described a   nontrivial angle-preserving
 mapping  of  $C_2$   making the marked paraxial rectangle
nonplanar. For $C_1$, we refer to the construction in the proof of Lemma \ref{lemma: 0-shear char} below, in particular to \eqref{eq: transformation}.   We will give a
complete characterization of $0$-shear-resistant configurations in Propositions
\ref{lemma: 0-shear char} and \ref{lemma: 0-shear char-gen}  later on, which will imply in particular that
 $C_3$  in Figure
\ref{fig:three} is $0$-shear-resistant. 

Cell $Z$ of configuration
 $C_4$  in Figure
\ref{fig:three} is again  $C_2$,  hence not $0$-shear-resistant. As a consequence,
 $C_4$  is not $0$-shear-resistant. 
On the
other hand, when considering $ C_4 = Z^{(1)}$ the marked paraxial
rectangle is  forced  to stay coplanar  for any angle-preserving  mapping.   This in particular implies that $ C_4  \in \calS_1$.

In the case of configuration  $C_5$  in Figure
\ref{fig:three}, one has that the top cell $\hat Z$ is not
$0$-shear-resistant. Any of its nontrivial angle-preserving
  mappings   cannot keep  the marked paraxial rectangle
coplanar.  Such
transformations are not precluded by considering $\hat Z^{(1)}$  since by passing from $\hat{Z}$ to  $\hat Z^{(1)}$ the configuration is only augmented by acyclic bonds. Thus,   $C_5$  is not $1$-shear-resistant as well. On the
other hand, $ C_5=  \hat Z^{(2)}$ cannot be
deformed  nontrivially preserving  the coplanarity of the marked paraxial
rectangle.  Hence, $C_5  \in \calS_2\setminus \calS_1$.

Configuration  $C_2$  in Figure \ref{fig:three} is an example of a
configuration which is not $k$-shear-resistant for any $k \in
\Nz_0$ (in fact it is the {\it smallest} configuration  in the sense of number of points which is  not in
$\calS_0$).  Indeed, we have that  $(C_2)^{(k)}=C_2 $  for all $k\in
\Nz$, as it consists of a single cell, so that no additional
angle-rigidity can follow by taking $k$ large. Note that the property
of not belonging to $\calS_k$ for any $k \in
\Nz_0$ does not necessarily imply
that the configuration is small. Indeed, Figure \ref{fig: shearing}
below shows that configurations which are  not  $k$-shear-resistant for any $k \in
\Nz_0$ can be arbitrarily large.  
\begin{figure}[h]
\centering
\begin{tikzpicture}[scale=0.45]

\draw[fill=blue!20, blue!20] (-15,1)--(11,1)--(11,4)--(-15,4)--cycle;
\draw[fill=blue!20, blue!20] (-15,1)--(-2,1)--(-2,-1)--(-15,-1)--cycle;
\draw[fill=red!20, red!20] (-14,-2)--(12,-2)--(12,-5)--(-14,-5)--cycle;
\draw[fill=red!20, red!20] (-1,-2)--(12,-2)--(12,0)--(-1,0)--cycle;

  \node at (4,1.75) {$f_2$};
    \node at (-1,-1) {$f_1$};

\draw[fill=black](0,0) circle(.07);
\draw[ultra thin](0,0)--++(1,0);
\draw[fill=black](1,0) circle(.07);
\draw[ultra thin](1,0)--++(0,1);
\draw[fill=black](3,0) circle(.07);
\draw[ultra thin](3,0)--++(0,1);
\draw[ultra thin](3,0)--++(1,0);
\draw[fill=black](4,0) circle(.07);
\draw[ultra thin](4,0)--++(1,0);
\draw[fill=black](5,0) circle(.07);
\draw[ultra thin](5,0)--++(0,1);
\draw[fill=black](7,0) circle(.07);
\draw[ultra thin](7,0)--++(1,0);
\draw[ultra thin](7,0)--++(0,1);
\draw[fill=black](8,0) circle(.07);
\draw[ultra thin](8,0)--++(1,0);
\draw[fill=black](9,0) circle(.07);
\draw[ultra thin](9,0)--++(0,1);
\draw[fill=black](11,0) circle(.07);
\draw[ultra thin](11,0)--++(1,0);
\draw[ultra thin](11,0)--++(0,1);
\draw[fill=black](12,0) circle(.07);

\draw[fill=black](1,1) circle(.07);
\draw[ultra thin](1,1)--++(1,0);
\draw[fill=black](2,1) circle(.07);
\draw[ultra thin](2,1)--++(1,0);
\draw[fill=black](3,1) circle(.07);
\draw[fill=black](5,1) circle(.07);
\draw[ultra thin](5,1)--++(1,0);
\draw[fill=black](6,1) circle(.07);
\draw[ultra thin](6,1)--++(1,0);
\draw[fill=black](7,1) circle(.07);
\draw[fill=black](9,1) circle(.07);
\draw[ultra thin](9,1)--++(1,0);
\draw[fill=black](10,1) circle(.07);
\draw[ultra thin](10,1)--++(1,0);
\draw[fill=black](11,1) circle(.07);
\draw[fill=black](-1,0) circle(.07);
\draw[ultra thin](-1,0)--++(1,0);
\draw[ultra thin](-1,0)--++(0,1);
\draw[fill=black](-3,0) circle(.07);
\draw[ultra thin](-3,0)--++(0,-1);
\draw[ultra thin](-3,0)--++(0,1);
\draw[fill=black](-7,0) circle(.07);
\draw[ultra thin](-7,0)--++(0,-1);
\draw[ultra thin](-7,0)--++(0,1);
\draw[fill=black](-11,0) circle(.07);
\draw[ultra thin](-11,0)--++(0,-1);
\draw[ultra thin](-11,0)--++(0,1);
\draw[fill=black](-15,0) circle(.07);
\draw[ultra thin](-15,0)--++(0,-1);
\draw[ultra thin](-15,0)--++(0,1);

\draw[fill=black](-15,-1) circle(.07);
\draw[ultra thin](-15,-1)--++(1,0);
\draw[fill=black](-14,-1) circle(.07);
\draw[ultra thin](-14,-1)--++(0,-1);
\draw[fill=black](-14,-2) circle(.07);
\draw[ultra thin](-14,-2)--++(1,0);

\foreach \j in {0,...,13}{
\draw[fill=black](-15 + \j,1) circle(.07);
\draw[ultra thin](-15+\j,1)--++(1,0);
\draw[ultra thin](-15+\j,1)--++(0,1);
}
\draw[fill=black](-1,1) circle(.07);

\foreach \j in {0,...,12}{
\draw[fill=black](-15 + \j,2) circle(.07);
\draw[ultra thin](-15+\j,2)--++(1,0);
\draw[ultra thin](-15+\j,2)--++(0,1);
}

\foreach \j in {0,...,12}{
\draw[fill=black](-15 + \j,3) circle(.07);
\draw[ultra thin](-15+\j,3)--++(1,0);
\draw[ultra thin](-15+\j,3)--++(0,1);
}

\foreach \j in {0,...,12}{
\draw[fill=black](-15 + \j,4) circle(.07);
\draw[ultra thin](-15+\j,4)--++(1,0);
}

\foreach \j in {0,...,12}{
\draw[fill=black](-3 + \j,3) circle(.07);
\draw[ultra thin](-3+\j,3)--++(1,0);
\draw[ultra thin](-3+\j,3)--++(0,1);
}
\draw[fill=black](10,3) circle(.07);
\draw[ultra thin](10,3)--++(0,1);

\foreach \j in {0,...,12}{
\draw[fill=black](-3 + \j,4) circle(.07);
\draw[ultra thin](-3+\j,4)--++(1,0);
}
\draw[fill=black](10,4) circle(.07);


\draw[fill=black](-2,2) circle(.07);
\draw[ultra thin](-2,2)--++(0,-1);
\draw[ultra thin](-2,2)--++(0,1);
\draw[fill=black](2,2) circle(.07);
\draw[ultra thin](2,2)--++(0,-1);
\draw[ultra thin](2,2)--++(0,1);
\draw[fill=black](6,2) circle(.07);
\draw[ultra thin](6,2)--++(0,-1);
\draw[ultra thin](6,2)--++(0,1);
\draw[fill=black](10,2) circle(.07);
\draw[ultra thin](10,2)--++(0,-1);
\draw[ultra thin](10,2)--++(0,1);
\draw[fill=black](0,-1) circle(.07);
\draw[ultra thin](0,-1)--++(0,-1);
\draw[ultra thin](0,-1)--++(0,1);
\draw[fill=black](4,-1) circle(.07);
\draw[ultra thin](4,-1)--++(0,-1);
\draw[ultra thin](4,-1)--++(0,1);
\draw[fill=black](8,-1) circle(.07);
\draw[ultra thin](8,-1)--++(0,-1);
\draw[ultra thin](8,-1)--++(0,1);
\draw[fill=black](12,-1) circle(.07);
\draw[ultra thin](12,-1)--++(0,-1);
\draw[ultra thin](12,-1)--++(0,1);

\begin{scope}[shift={(-13,-2)}]

\draw[fill=black](0,0) circle(.07);
\draw[ultra thin](0,0)--++(1,0);
\draw[fill=black](1,0) circle(.07);
\draw[ultra thin](1,0)--++(0,1);
\draw[fill=black](3,0) circle(.07);
\draw[ultra thin](3,0)--++(0,1);
\draw[ultra thin](3,0)--++(1,0);
\draw[fill=black](4,0) circle(.07);
\draw[ultra thin](4,0)--++(1,0);
\draw[fill=black](5,0) circle(.07);
\draw[ultra thin](5,0)--++(0,1);
\draw[fill=black](7,0) circle(.07);
\draw[ultra thin](7,0)--++(1,0);
\draw[ultra thin](7,0)--++(0,1);
\draw[fill=black](8,0) circle(.07);
\draw[ultra thin](8,0)--++(1,0);
\draw[fill=black](9,0) circle(.07);
\draw[ultra thin](9,0)--++(0,1);
\draw[fill=black](11,0) circle(.07);
\draw[ultra thin](11,0)--++(1,0);
\draw[ultra thin](11,0)--++(0,1);
\draw[fill=black](12,0) circle(.07);
\draw[ultra thin](12,0)--++(1,0);
\draw[fill=black](13,0) circle(.07);
\draw[fill=black](1,1) circle(.07);
\draw[ultra thin](1,1)--++(1,0);
\draw[fill=black](2,1) circle(.07);
\draw[ultra thin](2,1)--++(1,0);
\draw[fill=black](3,1) circle(.07);
\draw[fill=black](5,1) circle(.07);
\draw[ultra thin](5,1)--++(1,0);
\draw[fill=black](6,1) circle(.07);
\draw[ultra thin](6,1)--++(1,0);
\draw[fill=black](7,1) circle(.07);
\draw[fill=black](9,1) circle(.07);
\draw[ultra thin](9,1)--++(1,0);
\draw[fill=black](10,1) circle(.07);
\draw[ultra thin](10,1)--++(1,0);
\draw[fill=black](11,1) circle(.07);

\end{scope}


\draw[fill=black](-1,-3) circle(.07);
\draw[ultra thin](-1,-3)--++(0,-1);
\draw[ultra thin](-1,-3)--++(0,1);
\draw[fill=black](-5,-3) circle(.07);
\draw[ultra thin](-5,-3)--++(0,-1);
\draw[ultra thin](-5,-3)--++(0,1);
\draw[fill=black](-9,-3) circle(.07);
\draw[ultra thin](-9,-3)--++(0,-1);
\draw[ultra thin](-9,-3)--++(0,1);
\draw[fill=black](-13,-3) circle(.07);
\draw[ultra thin](-13,-3)--++(0,-1);
\draw[ultra thin](-13,-3)--++(0,1);

\foreach \j in {0,...,11}{
\draw[fill=black](\j,-2) circle(.07);
\draw[ultra thin](\j,-2)--++(1,0);
}
\draw[fill=black](12,-2) circle(.07);

\foreach \j in {0,...,12}{
\draw[fill=black](\j-1,-3) circle(.07);
\draw[ultra thin](\j-1,-3)--++(1,0);
\draw[ultra thin](\j-1,-3)--++(0,1);
}
\draw[fill=black](12,-3) circle(.07);
\draw[ultra thin](12,-3)--++(0,1);

\foreach \j in {0,...,12}{
\draw[fill=black](\j-1,-4) circle(.07);
\draw[ultra thin](\j-1,-4)--++(1,0);
\draw[ultra thin](\j-1,-4)--++(0,1);
}
\draw[fill=black](12,-4) circle(.07);
\draw[ultra thin](12,-4)--++(0,1);

\foreach \j in {0,...,12}{
\draw[fill=black](\j-1,-5) circle(.07);
\draw[ultra thin](\j-1,-5)--++(1,0);
\draw[ultra thin](\j-1,-5)--++(0,1);
}
\draw[fill=black](12,-5) circle(.07);
\draw[ultra thin](12,-5)--++(0,1);

\foreach \j in {0,...,12}{
\draw[fill=black](\j-13,-4) circle(.07);
\draw[ultra thin](\j-13,-4)--++(1,0);
}

\foreach \j in {0,...,12}{
\draw[fill=black](\j-13,-5) circle(.07);
\draw[ultra thin](\j-13,-5)--++(1,0);
\draw[ultra thin](\j-13,-5)--++(0,1);
}

\end{tikzpicture}
\caption{Example of a configuration  $C \not \in \calS_k$  for all $k \in \Nz_0$. Indeed, 
  the faces $f_2$ are
  $1$-shear-resistant, but $f_1$ is not $k$-shear-resistant for 
  all 
  $k \in \Nz_0$. (The corresponding nontrivial
   mapping  is the one indicated  by the  colors: leave  the points in the red region 
   in $\Rz^2 \times \lbrace 0 \rbrace$  and move  the points in the blue
   region  by the vector $(0,1-\cos t,\sin t)$ for any $t>0$ small.)     The  configuration extends the example provided in Figure~\ref{fig:cookie}.  }
\label{fig: shearing}
\end{figure}
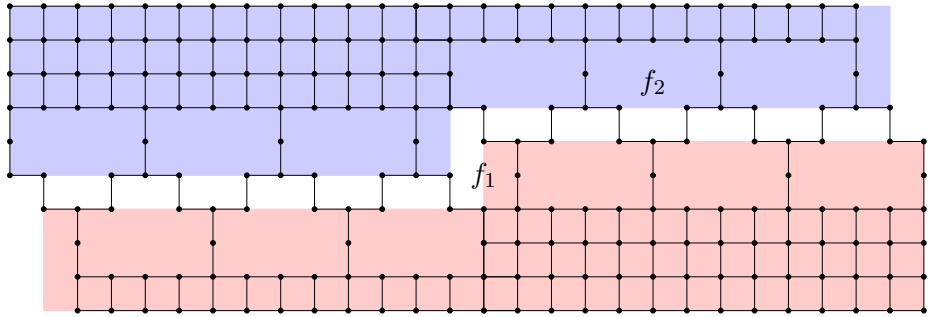

 The focus of this section is that of proving that
condition \eqref{suff} indeed characterizes angle-rigidity in
$\calS_k$. More precisely, we have the following.


%

\begin{proposition}[ Characterization,   shear-resistance]\label{prop: necessary}
 Consider $C  \in  \calS_k$ for some $k\in \Nz_0$. Then, $C$ is not angle-rigid iff condition \eqref{suff} holds. 
\end{proposition}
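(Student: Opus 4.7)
The $(\Leftarrow)$ direction is immediate: if \eqref{suff} holds then $C \in \mathcal{F}$ by Proposition \ref{prop: F}, and the map $\boldsymbol{\varphi}$ produced in Definition \ref{def:s} is a nontrivial angle-preserving map for every sufficiently small $\rho$, so $C$ is not angle-rigid. The content of the proposition therefore lies in the $(\Rightarrow)$ direction: assuming $C \in \mathcal{S}_k$ fails to be angle-rigid, we must produce a folding of $C$, whence \eqref{suff} will follow by Proposition \ref{prop: F}.

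The plan for the nontrivial direction is to start from an arbitrary nontrivial angle-preserving map $\boldsymbol{\varphi}\colon C \to \Rz^{3n}$ at some scale $\delta \ll \epsi$ and to read off from $\boldsymbol{\varphi}$ the data $(A,\widetilde{C})$ of a folding. First I would pre-compose with an isometry of $\Rz^3$ so that a chosen bond $(\bx_0,\by_0)\in\bN(C)$ is fixed pointwise, placing that bond in $\Rz^2\times\{0\}$. Next I would use the $k$-shear-resistance of each cell $Z$ of $C$: applying Definition \ref{def: shear} to the restriction $\boldsymbol{\varphi}|_{Z^{(k)}}$ forces every pair of bonds in $\bN(Z)$ that forms a paraxial rectangle with interior in $f(Z)$ to have coplanar image, and combining this with the length preservation \eqref{ar1} and the angle preservation \eqref{ar2} at every internal vertex of $Z$ I would propagate the planarity from these paraxial rectangles to the whole cell. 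The expected conclusion is that $\boldsymbol{\varphi}|_Z$ is the restriction to $Z$ of a rigid motion of $\Rz^3$, and that between two cells sharing a bond the only residual degree of freedom is a dihedral rotation about that bond.

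From this rigid-plate picture the folding axis should emerge by a combinatorial argument at the endpoints of the \emph{hinged} bonds, i.e., the bonds $(\bx,\by)\in N(C)$ at which the dihedral angle between the two incident cell planes is nonzero. The angles in $T(C)$ at $\bx$ and $\by$ are preserved and frozen at values in $\{\pi/2,\pi,3\pi/2\}$, which forces any two hinged bonds sharing an endpoint to be colinear. Iterating, all hinged bonds lie along a single axis $A\subset\Zz^2$. Define $\widetilde{C}$ as the union of those connected components of $C\setminus A$ whose cells are nontrivially rotated by $\boldsymbol{\varphi}$ relative to the fixed normalization. By nontriviality of $\boldsymbol{\varphi}$, both $\widetilde{C}\setminus A$ and $(C\setminus\widetilde{C})\setminus A$ are nonempty, and by construction $\boldsymbol{\varphi}$ realizes the folding map of Definition \ref{def:s} for the pair $(A,\widetilde{C})$. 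Hence $C\in\mathcal{F}$, and \eqref{suff} follows from Proposition \ref{prop: F}.

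The main obstacle is the propagation step in the middle paragraph: shear-resistance only directly controls paraxial rectangles within each cell's $k$-neighborhood, so extending the planarity and congruence to the entire cell, in particular to cells that are longer rectangles or more general cycles, requires a careful case analysis exploiting the angle preservation at every internal vertex and along shared edges of adjacent cells. A second subtler point is showing that hinged bonds must be colinear rather than merely parallel or meeting at a common vertex; this is where the discrete rigidity of right angles in $\Zz^2$ becomes essential, since any noncolinear pair of hinged bonds at a common endpoint would eventually violate \eqref{ar2} once the dihedral rotations are composed.
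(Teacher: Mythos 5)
Your $(\Leftarrow)$ direction is fine, but the $(\Rightarrow)$ direction rests on two steps that are not secured and that do not follow the paper's route. First, the ``rigid plate'' step: $k$-shear-resistance of a cell $Z$ only guarantees, for an angle-preserving $\bvarphi$ on $C$, that the \emph{paraxial rectangles} of $Z$ with interior in $f(Z)$ have coplanar images; it does not say that $\bvarphi|_Z$ is a rigid motion. Indeed a cell of a configuration in $\calS_k$ with $k\ge 1$ need not be angle-rigid or even $0$-shear-resistant on its own (the cell $Z=C_2$ inside $C_4\in\calS_1$ in Figure~\ref{fig:three} is the paper's own example), so whatever planarity the cell retains comes from the surrounding structure, and propagating from ``some paraxial rectangles stay coplanar'' to ``the whole cell stays planar'' is precisely the hard part. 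You flag this as ``the main obstacle'' but offer no argument, so the proof is incomplete at its core. Second, even granting rigid plates, your passage from hinged bonds to a \emph{single} axis $A$ is a leap: the colinearity argument via \eqref{ar2} only constrains hinged bonds \emph{sharing an endpoint}; a nontrivial angle-preserving map of a connected configuration can perfectly well fold about several disjoint (parallel or orthogonal) axes at once, in which case your $\widetilde C$ does not produce a folding in the sense of Definition~\ref{def:s}. To repair this you would need to show that one of these folds can be performed in isolation, which already requires the disconnection statement \eqref{suff} you are trying to prove.

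For comparison, the paper never attempts to classify the map $\bvarphi$ or to show $C\in\calF$. It isolates a single consequence of shear-resistance --- observation \eqref{eq: coplanar}: any paraxial rectangle formed by two bonds and lying inside a cycle keeps a coplanar image --- then picks a \emph{minimal} path $(\bx_1,\ldots,\bx_m)$ whose image under $\bvarphi$ is non-coplanar with $\bvarphi(\bx_2),\ldots,\bvarphi(\bx_{m-1})$ on a line, and assumes for contradiction that \eqref{suff} \emph{fails} for the axis carrying that line. The surviving connecting path then closes a cycle containing paraxial rectangles built from $\bx_1,\bx_2,\bx_{m-1},\bx_m$ and a crossing point $\bar\bx$, and \eqref{eq: coplanar} plus \eqref{ar2} force $\lbrace\bvarphi(\bx_i)\rbrace_{i=1}^m$ to be coplanar, a contradiction. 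This localized contrapositive sidesteps both of the global claims your plan depends on; I would encourage you to adopt a similar reduction to a single cycle rather than trying to establish a global plate-and-hinge decomposition.
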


 Recall  that the $\calS_k$ assumption in
Proposition \ref{prop: necessary} cannot be removed. In fact, Figures~\ref{fig:cookie} and \ref{fig: shearing} give examples of
  not angle-rigid  configurations which do not fulfill \eqref{suff}.

\begin{proof}[Proof of Proposition \ref{prop: necessary}]
 If condition \eqref{suff} holds, then  $C$ is not angle-rigid by Proposition~\ref{prop: suff}. Thus, we only need to show the other implication.  
 We  start by pointing  out an aspect of   Definition~\ref{def:
  shear}. Given any cycle in the bond  structure  of a $k$-shear-resistant configuration $C$, and four different points   $\bx_i,\bx_{i'}, \bx_j,\bx_{j'}$ with    $(\bx_i,\bx_{i'}), (\bx_j,\bx_{j'}) \in  \bN(C)$ which form a paraxial rectangle whose interior is contained in the interior of the cycle, then 
\begin{align}\label{eq: coplanar}
\bvarphi(\bx_i), \bvarphi(\bx_{i'}), \bvarphi(\bx_{j}),
  \bvarphi(\bx_{j'})  \ \ \text{ are coplanar}
\end{align} 
for each  angle-preserving   mapping    $\bvarphi$.  In fact, if this were not the
case, we would find a cell in the interior of the cycle  and a corresponding paraxial rectangle whose images under $\bvarphi$ are not coplanar. Therefore, this cell would not be  $k$-shear-resistant. 

As $C$ is    not angle-rigid,   we find a nontrivial  angle-preserving
  mapping 
$\bvarphi$. In particular, the
points $\bvarphi(C) \subset \Rz^3$ are not coplanar  as  mappings 
$\bvarphi$ keeping the points coplanar are necessarily  trivial.
  As
$C$ is connected, we can hence find a path  $(\bx_1,\ldots, \bx_m)
\subset C$  such that   $\lbrace \bvarphi(\bx_i)
\rbrace_{i=2}^{m-1}$ are all contained in a line, but $\lbrace
\bvarphi(\bx_i) \rbrace_{i=1}^{m}$ are not coplanar. Without
restriction we can choose the number $m$ minimal, i.e., for all paths
 consisting of at most $(m-1)$ points  the images under  $\bvarphi$  are coplanar.
Moreover,  with no loss of generality, by possibly changing
coordinates we can  assume that  $\bvarphi(\bx_i) = \bx_i$  for all $i=1\ldots,m-1$,  $\lbrace \bx_i \rbrace_{i=2}^{m-1}$ are contained in the axis $\Zz \times \lbrace 0 \rbrace$,  $\bx_1 = (0,1)$, and 
\begin{align}\label{eq: last atom choice}
\bx_m \in \lbrace (m-3,1), (m-3,-1)\rbrace.
\end{align}
We also note that $\bvarphi(\bx_m) \in \lbrace m-3\rbrace \times
 \Rz^2$.  The minimality of $m$ implies  that all points $\lbrace \bx_i
\rbrace_{i=3}^{m-2}$ do not have neighbors off  the  axis $
\Zz \times \lbrace 0 \rbrace$. Indeed, such a neighbor, the axis $\Zz
\times \lbrace 0 \rbrace$, and one of the points $\bx_1$ or $\bx_m$
would not remain coplanar by the mapping  $\bvarphi$   and one could
hence find a shorter  noncoplanar  path.  

We now suppose by contradiction that  condition  \eqref{suff} does not hold, i.e.,
the configuration  $\widetilde C$   obtained by removing all
points of $C$ belonging to   $\Zz \times \lbrace 0 \rbrace$ and having
at most one neighbor off $\Zz \times \lbrace 0 \rbrace$  is still
connected. Therefore, there exists a path in $\widetilde C$, denoted
by $P$,   connecting $\bx_1$ and $\bx_m$. Note that the points
$\lbrace \bx_i \rbrace_{i=3}^{m-2}$ are not contained in $\widetilde
C$ as they do not have neighbors off axis.  Consequently, there exists
a cycle $S$  in $C$,  being a subset of the points $P \cup \lbrace \bx_i
\rbrace_{i=2}^{m-1}$.    We now argue that it is not restrictive to assume that $\bx_1$ and $\bx_m$ are contained in
the cycle $S$ or in the interior of the cycle $S$. In fact, at least one of the points $\bx_1 = (0,1)$ and $(0,-1)$ is contained in
the cycle $S$ or in the interior of the cycle $S$. If only $(0,-1)$ satisfies this property, this necessarily implies that $(0,0) \in P$ and therefore $(0,-1) \in C$ by \eqref{suff}. Thus,  we can simply replace   $\bx_1$ by $(0,-1)$ in the path $P$ observing that $\bvarphi(0,-1) = (0,-1)$ by property \eqref{ar2}. In a  similar fashion, by possibly reflecting $\bx_m$  along $\Rz \times \lbrace 0 \rbrace$ we can suppose that $\bx_m$ is contained in
the cycle $S$ or in the interior of the cycle $S$.   

We now distinguish
two cases: (a) $\bx_m  =  (m-3,1)$ and  (b) $\bx_m = (m-3,-1)$, cf.\ \eqref{eq: last atom choice}. 

In case (a), the points $\bx_1, \bx_2, \bx_{m-1},\bx_m$ form a paraxial rectangle whose interior is contained in the interior of $S$. (Recall that $\bx_1 = (0,1)$ and $\bx_2 = (0,0)$.) Then, \eqref{eq: coplanar} implies that $\bvarphi(\bx_1),\bvarphi(\bx_2),\bvarphi(\bx_{m-1}),\bvarphi(\bx_m)$ are coplanar. This, however, contradicts our choice of the path $(\bx_1,\ldots, \bx_m)$.

 Consider now  case (b). We first observe that the path $P$ in $\widetilde C$ connecting $\bx_1$ and $\bx_m$ needs to cross the axis $\Zz \times \lbrace 0 \rbrace$, i.e., we find $\bar{\bx} \in (\Zz  \times \lbrace 0 \rbrace) \cap P$. We observe that it is not restrictive to suppose that also 
\begin{align}\label{eq: upper/lower point}
\bar{\bx} + (0,1) \in P, \quad \quad \quad \bar{\bx} - (0,1) \in P.
\end{align} 
In fact, consider the smallest subpath  $(\hat{\bx}_1,\ldots,
\hat{\bx}_l)$     of $P \subset \widetilde{C}$ containing $\bar{\bx}$
such that $\hat{\bx}_1 \in \Zz \times \lbrace -1 \rbrace$, $\lbrace
\hat{\bx}_i \rbrace_{i=2}^{l-1} \subset \Zz \times \lbrace 0 \rbrace$,
and $\hat{\bx}_l \in \Zz \times \lbrace 1 \rbrace$. Then, by the
definition of $\widetilde C$  we have that $\lbrace \hat{\bx}_i +
(0,1) \rbrace_{i=2}^{l-1} \subset \widetilde C$ and  $\lbrace
\hat{\bx}_i - (0,1) \rbrace_{i=2}^{l-1} \subset \widetilde C$. This
means that we can connect $\hat{\bx}_1$ and $\hat{\bx}_l$ with a path
in $\widetilde C$ which contains only one point of the axis $\Zz
\times \lbrace 0 \rbrace$. By replacing the part $(\hat{\bx}_1,\ldots,
\hat{\bx}_l)$  of $P$ by this new path, we obtain the desired property
\eqref{eq: upper/lower point}.

 Define $\bar{\bx}' = \bar{\bx} -(0,1)$ and $\bar{\bx}'' = \bar{\bx}
+(0,1)$ for brevity, and recall that $\bar{\bx}',\bar{\bx}'' \in P$ by
\eqref{eq: upper/lower point}.  If the path $P$ crosses the  axis $\Zz \times \lbrace 0 \rbrace$ several times, we choose $\bar{\bx} \in (\Zz  \times \lbrace 0 \rbrace) \cap P$ to be the point closest to $\bx_2$. Then, we observe that $\bx_1,
\bx_2,  \bar{\bx}, \bar{\bx}''$ and $\bx_{m-1}, \bx_m,  \bar{\bx}, \bar{\bx}'$ form paraxial rectangles whose interior is contained in the interior of the cycle $S$.  In view of \eqref{ar2}, we get that the
points  $\bvarphi(\bar{\bx}    ), \bvarphi(\bar{\bx}'    ),
\bvarphi(\bar{\bx}''    )$ lie on a line. 
Moreover, as $\bx_1,
\bx_2,  \bar{\bx}, \bar{\bx}''$ form a paraxial rectangle whose
interior is contained in the interior of the cycle $S$, we find that
$\bvarphi(\bx_1), \bvarphi(\bx_2), \bvarphi(\bar{\bx}),
\bvarphi(\bar{\bx}'')$ are  also  coplanar by \eqref{eq: coplanar}. In a similar fashion, $\bx_{m-1}, \bx_m,  \bar{\bx}, \bar{\bx}'$ form a paraxial rectangle  contained in the interior of cycle $S$, and thus $\bvarphi(\bx_{m-1}), \bvarphi(\bx_m), \bvarphi(\bar{\bx}), \bvarphi(\bar{\bx}')$ are coplanar. This entails that also $\lbrace \bvarphi(\bx_i) \rbrace_{i=1}^m$ are coplanar which clearly contradicts our choice of the path $(\bx_1,\ldots, \bx_m)$. This concludes the proof.   
\end{proof}

%
%
%
%
%
%
%
%
%
%
%
%
%
%
%

 We close this discussion by summarizing the results of Sections
\ref{sec: suff}--\ref{sec: cond} in the
following statement.

\begin{corollary}[Necessary and sufficient condition]\label{cor} For  $C \in
  \calF  \cup  \calS_k$ for some $k\in \Nz_0$ we have that $C$ is   not angle-rigid  iff 
  condition 
  \eqref{suff} holds.
\end{corollary}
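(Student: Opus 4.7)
The plan is to observe that Corollary \ref{cor} is essentially a direct consequence of the three propositions proved earlier in Sections \ref{sec: suff}--\ref{sec: cond}, so the proof amounts to assembling them via a case distinction on the membership $C \in \calF$ versus $C \in \calS_k$.

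First, I would dispatch the direction \eqref{suff} $\Rightarrow$ $C$ is not angle-rigid, which requires no case distinction at all: this is exactly the content of Proposition \ref{prop: suff} and holds for every connected $C \in \Zz^{2n}$, in particular for every $C \in \calF \cup \calS_k$.

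For the converse direction, assume $C \in \calF \cup \calS_k$ is not angle-rigid, and split into two cases. If $C \in \calF$, then \eqref{suff} holds immediately by Proposition \ref{prop: F}, independently of whether we assume non-angle-rigidity (the implication $C \in \calF \Rightarrow$ \eqref{suff} was proved as Step 2 of that proposition). If instead $C \in \calS_k$ for some $k \in \Nz_0$, then Proposition \ref{prop: necessary} directly yields \eqref{suff} from the non-angle-rigidity of $C$. Combining the two cases, we conclude that non-angle-rigidity implies \eqref{suff} on the whole union $\calF \cup \calS_k$.

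Since both implications are thereby reduced to results already established, there is no real obstacle: the only thing to be careful about is to state clearly that the case $C \in \calF$ is handled by the purely combinatorial equivalence in Proposition \ref{prop: F} and does not rely on any shear-resistance assumption, while the case $C \in \calS_k$ invokes the genuinely geometric argument of Proposition \ref{prop: necessary}. This also highlights why the corollary fits naturally as a summary: $\calF$ and $\calS_k$ are two a priori distinct classes on which \eqref{suff} becomes sharp, for two conceptually different reasons.
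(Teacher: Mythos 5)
Your proposal is correct and matches the paper's (implicit) argument: the corollary is stated as a summary of Propositions \ref{prop: suff}, \ref{prop: F}, and \ref{prop: necessary}, and your assembly of the two cases $C \in \calF$ and $C \in \calS_k$ is exactly how it follows. The only point worth keeping in mind is that condition \eqref{suff} and Proposition \ref{prop: suff} are formulated for connected configurations, which is implicit throughout this part of the paper.
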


 Note that  $\calF  \cup  \calS_k$ does not exhaust all possible configurations.  More precisely, the situation is depicted in Figure
\ref{fig:corollary} below: as $k$ grows,
  $\calS_k $ covers all angle-rigid configurations and  
  configurations in $\calS_k$  not being angle-rigid but belonging  to $\calF$ (Proposition
  \ref{prop: necessary}). There exist not angle-rigid configurations
  which are not in $\calF$ (see Figure \ref{fig:cookie}), as well as
  configurations in $\calF$ (hence  not angle-rigid  by   definition)   that are not in $\calS_k$ for any  $k \in \Nz_0$ 
  (see left-most configuration in Figure~\ref{fig:three}).

\begin{figure}[ht]
\centering
\pgfdeclareimage[width=105mm]{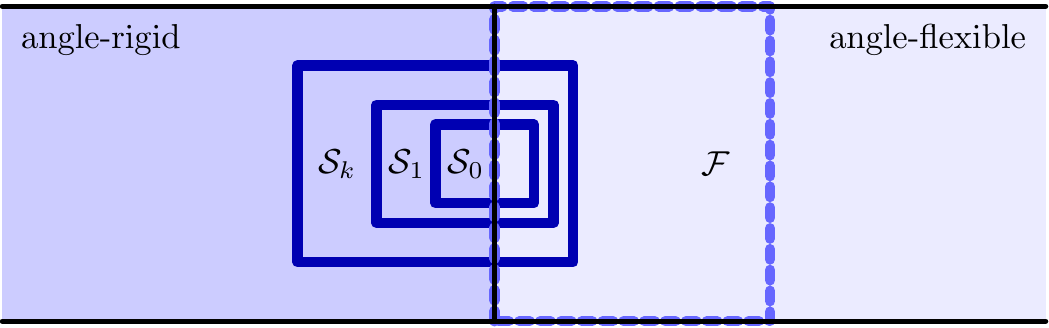}{corollary}  
\pgfuseimage{corollary} 
\caption{ The classes $\calF$ (dashed) and $\calS_k$ (solid) in
  relation with angle-rigidity.}
\label{fig:corollary}
\end{figure}

  \section{Characterization of  $0$-shear-resistant  cells} \label{sec:char}

Within the class $\calF  \cup   \calS_k$, 
Corollary \ref{cor} reduces the problem of assessing angle-rigidity to
that of verifying condition \eqref{suff}. Therefore, we are interested in checking if a configuration $C \in \Zz^{2n}$ lies in $\calF \cup  \calS_k$. As $C \in \mathcal{F}$ is equivalent to condition  \eqref{suff} by Proposition \ref{prop: F},   one is hence
left to  assess  if there exists $k \in \Nz_0$ such  that $C\in \calS_k$. 

%

To check if $C\in \calS_k$ can still be very demanding from an
 computational  viewpoint.  Consider again the example  in Figure \ref{fig:
  shearing}, where the configuration does not belong to $\calS_k$ for
any $k\in \Nz_0$ (and indeed is angle-flexible).  Adding  a
bonded point on the  top of the rightmost line makes the rightmost cell with face $f_2$ rigid.  This augmented configuration turns out to belong to $\mathcal{S}_k$ for some $k \in \Nz$. 

This in particular entails that, in general, belonging to class
$\mathcal{S}_k$ is a global property, and checking it requires 
the analysis of the whole configuration.

The focus of this section is on the  class   $\calS_0$.  This  is  special, for we are able to present complete
characterizations based
on {\it localized arguments}.


 We start by introducing some notation. 
 Let  a cell  $Z\in \mathcal{Z}(C)$ be given.   Referring to the discussion at the beginning of Section \ref{sec: cond}, we suppose that  $Z$ does not contain acyclic bonds. Therefore,  the
boundary of   its face  $f(Z)$ is a  (simple)  polygon.  We denote the points of the cell by $\lbrace \bp^1,\ldots,
\bp^m\rbrace$ with $|\bp^i - \bp^{i+1}| = 1$ for $i=1,\ldots,m$, where we understand the
 indices modulo $m$, if necessary. Note that the points are not
 necessarily pairwise different.  We say that a cell is \emph{simple}
 if  $|\bp^i - \bp^{j}| > 1$ for all $i,j$ with $|i-j| \ge 2$.  We refer to Figure \ref{fig:simple} for examples of nonsimple cells. In case
 that $Z$ is not simple, the bond  structure of $Z$ is
 delimited by a   minimal  (in the sense of set inclusion)   polygon of points of $Z$,  which we call the \emph{outer
   polygon}. Correspondingly, the points of $Z$ in the outer polygon
  form   a cell, which we call the \emph{outer cell}, denoted by
 $Z^{\rm out}$. Up to  a negligible set,  the set $f(Z^{\rm
   out}) \setminus f(Z)$ is the union of faces corresponding to other
 cells of the bond  structure.  These  cells are called the \emph{inner cells} corresponding to $Z$ and their collection is indicated by $\mathcal{I}_Z$. For simple cells $Z$, we have $Z  = Z^{\rm out}$ and  $\mathcal{I}_Z = \emptyset$. Note that $Z^{\rm out}$ and $\mathcal{I}_Z$ are simple cells.   In fact, if an inner cell were not simple, it could be decomposed into two smaller cells. If the outer cell were not simple, there would exist a polygon with less points of $Z$  delimiting $Z$.

\begin{figure}[ht]
\centering
\pgfdeclareimage[width=115mm]{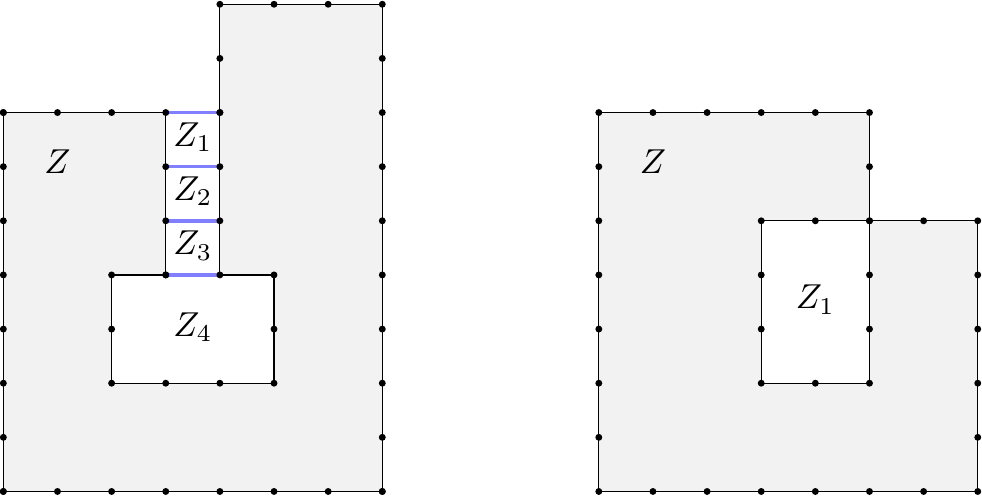}{simple}  
\pgfuseimage{simple} 
\caption{Examples of nonsimple cells. Here, $f(Z)$ is shaded and $Z_i\in {\mathcal I}_Z$
  are  inner   cells.}
\label{fig:simple}
\end{figure}

 We denote the vertices of  the  polygon $\partial f(Z)$   by $\lbrace \bv^1,\ldots,
\bv^n\rbrace$ for  some 
 $n \in 2\mathbb{N}$, ordered  counterclockwise.   (Here, $n$ is unrelated to the number of points in a configuration.)      We  understand the
 indices modulo $n$, if necessary.  Clearly, we have  $\lbrace \bv^1,\ldots,
\bv^n\rbrace \subset \lbrace \bp^1,\ldots,
\bp^m\rbrace$, where in general the inclusion is strict.   We decompose the set of  the
 vertices of the polygon 
 into the \emph{right- and left-set} 
\begin{align*}
I_{1,+}  (Z)  = \lbrace  \bv^i  \colon \,  \bv^{i+1} - \bv^i  \in
\mathbb{N} (1,0) \rbrace,\quad    I_{1,-}(Z) = \lbrace  \bv^i \colon \,  \bv^{i+1} -\bv^i  \in \mathbb{N} (-1,0) \rbrace 
\end{align*}
 and the \emph{up- and down-set}
\begin{align*}
I_{2,+}(Z) = \lbrace  \bv^i \colon \,  \bv^{i+1} - \bv^i  \in
\mathbb{N} (0,1) \rbrace,\quad    I_{2,-}(Z) = \lbrace  \bv^i \colon \,  \bv^{i+1} -\bv^i  \in \mathbb{N} (0,-1) \rbrace.
\end{align*}
 Note that we necessarily have that $ \#  I_{j,\pm}(Z) \geq 1$ for $j=1,2$. 

  With the aim of proving a characterization of $0$-shear-resistant cells, we address the case of simple cells first.  

\begin{proposition}[Characterization of $0$-shear-resistance,  simple cells]\label{lemma: 0-shear char}
A  simple  cell  $Z$  is $0$-shear-resistant  iff  
$\min\lbrace \# I_{1,+}(Z), \#I_{1,-}(Z) \rbrace  =  \min\lbrace \#
I_{2,+}(Z), \#I_{2,-}(Z) \rbrace  =  1$  iff the configuration consisting of $Z$ is angle-rigid.   
\end{proposition}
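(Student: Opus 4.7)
I will establish the three-way equivalence by proving the cycle $(C) \Rightarrow (A) \Rightarrow (B) \Rightarrow (C)$, where $(A)$ stands for $0$-shear-resistance of $Z$, $(B)$ for the condition $\min\{\#I_{j,+}(Z), \#I_{j,-}(Z)\} = 1$ for both $j \in \{1,2\}$, and $(C)$ for angle-rigidity of $Z$ as a standalone configuration. The implication $(C) \Rightarrow (A)$ is immediate: any isometry preserves coplanarity, so the image of a paraxial rectangle under a rigid motion is again a planar quadrilateral.

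For $(A) \Rightarrow (B)$, I argue the contrapositive. Assume $\#I_{j,+}(Z) \geq 2$ and $\#I_{j,-}(Z) \geq 2$ for some $j$, WLOG $j = 2$, so that the polygon $\partial f(Z)$ oscillates vertically at least twice. Following the template of Figures \ref{fig:cookie} and \ref{fig: shearing}, I select a horizontal cut $\ell = \Rz \times \{c + 1/2\}$ that does not separate intermediate atoms of any vertical polygon edge of length $\geq 2$, and define $\bvarphi$ by translating all atoms of $Z$ strictly below $\ell$ by $\mathbf{v}_t := (0, 1-\cos t, \sin t)$ while fixing the remaining atoms. For a unit vertical bond joining a translated atom below to a fixed atom above, the new bond vector is $(0,1,0) - \mathbf{v}_t = (0, \cos t, -\sin t)$, which still has unit length, so bond lengths across $\ell$ are preserved; the simplicity of $Z$ (no chord bonds between non-adjacent polygon atoms) excludes any additional neighbor pair that could spoil the construction. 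Under the hypothesis $\#I_{2,\pm}(Z) \geq 2$, a combinatorial inspection of $\partial f(Z)$ yields both a suitable level $c$ and a paraxial rectangle $R$ with bonded horizontal sides straddling $\ell$ whose interior lies in $f(Z)$; the image of $R$ under $\bvarphi$ then fails to be coplanar for $t > 0$, contradicting $0$-shear-resistance.

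For $(B) \Rightarrow (C)$, assume $\min\{\#I_{j,+}, \#I_{j,-}\} = 1$ for both $j$, WLOG $\#I_{2,+}(Z) = 1$, and let $E_*$ denote the unique up-edge. Let $\bvarphi$ be an angle-preserving perturbation of the identity. The straight-angle constraint $\theta = \pi$ at each intermediate atom forces the image of every maximal straight polygon edge to be a 3D segment of the same length; consequently, $\bvarphi(Z)$ is a closed 3D rectilinear polygon with identical edge lengths and vertex angles to $Z$. Applying a rigid motion so that $\bvarphi(E_*) = E_*$ in the $xy$-plane, I traverse $\partial f(Z)$ counterclockwise from $E_*$ and argue by induction on the vertex count that each subsequent image edge must also lie in the $xy$-plane. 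The uniqueness of $E_*$ as the sole element of $I_{2,+}(Z)$ is decisive: any out-of-plane torsion at a vertex would produce a net vertical displacement which cannot be cancelled elsewhere, as there is no alternative up-edge to compensate; this would contradict the closure of $\bvarphi(\partial f(Z))$. Hence $\bvarphi$ is a planar isometry, and $Z$ is angle-rigid.

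The principal obstacle is the rigidity propagation in $(B) \Rightarrow (C)$: translating the "vertical accumulation" intuition into a rigorous combinatorial induction along $\partial f(Z)$ requires careful bookkeeping of the torsional freedoms at each corner together with the global 3D closure constraint. The constructive direction $(A) \Rightarrow (B)$ is more explicit, but the existence of a cut line $\ell$ that simultaneously (i) avoids the interiors of long vertical edges and (ii) supports a paraxial rectangle straddling it is itself a nontrivial combinatorial statement about the horizontal level sets of the simple polygon $\partial f(Z)$.
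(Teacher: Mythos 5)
Your cycle of implications matches the paper's, and $(C)\Rightarrow(A)$ is fine, but both nontrivial steps have genuine gaps. For $(A)\Rightarrow(B)$, your two-piece construction (translate everything below a horizontal line $\ell=\Rz\times\{c+1/2\}$ by $(0,1-\cos t,\sin t)$, fix the rest) forces every vertical edge of $\partial f(Z)$ crossing $\ell$ to have length exactly $1$: if a crossing edge has length $\ge 2$, then one endpoint of the crossing bond has a collinear vertical neighbour on its own side of the cut, and the straight angle $\pi$ there is destroyed, violating \eqref{ar2}. Such a level $c$ need not exist. Consider the simple cell with corners $(0,0),(0,4),(2,4),(2,2),(4,2),(4,3),(6,3),(6,0)$: it has $\#I_{2,+}=\#I_{2,-}=2$, yet the vertical edge at $x=0$ has length $4$ and is crossed by every admissible level, so no cut with your property (i) exists. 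This is precisely the situation \eqref{cases2} for which the paper builds the four-piece map \eqref{eq: transformation} out of three rotations about axes parallel to the $x_1$-axis, whose existence is the content of Lemma \ref{lemma: circles}; there each maximal arc of the polygon between the four marked corners moves by a single isometry, which is what preserves the straight angles along long vertical edges. Your construction only covers the favourable subcase where an isolated unit-length ``zigzag'' level exists.

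For $(B)\Rightarrow(C)$, your argument uses only the uniqueness of the up-edge $E_*$ and the claim that torsion creates an uncancellable net vertical displacement. That cannot be the whole mechanism: the same cell above has a unique left-edge ($\#I_{1,-}=1$), so your reasoning applied verbatim to the horizontal family would certify it as angle-rigid, yet it is not angle-rigid by the very statement you are proving. A correct proof must use $\#I_{1,+}=1$ and $\#I_{2,+}=1$ (after reflections) \emph{together}: the paper normalizes the polygon to ``unique right-edge, then unique up-edge, then a descending staircase of alternating left- and down-edges,'' pins $\bv^1,\bv^2,\bv^3$, and observes that any out-of-plane rotation of a tail of the staircase about the axis through $\bv^i,\bv^{i+1}$ strictly decreases $|\bv^3-\bv^{n+1}|$, contradicting closure; coplanarity then combines with \eqref{ar2} to force the identity. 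You explicitly flag this propagation step as the principal obstacle, and indeed the displacement-accumulation heuristic as stated does not close it.
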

%

For the proof  of Proposition \ref{lemma: 0-shear char}, we will
make use of the elementary geometrical construction described in the
following lemma. 

\begin{lemma}[Geometric construction]\label{lemma: circles}
Let $\by^1,\by^2,\by^3,\by^4 \in  \lbrace 0 \rbrace \times \mathbb{R}
 \times \lbrace 0 \rbrace   $ be given. Suppose that  the second
 components of these vectors  satisfy  
$$ {\rm (a)} \ \ y^1_2 < y^4_2 \le y^2_2 < y^3_2 \quad \quad \quad \quad \text{or} \quad \quad \quad \quad {\rm (b)} \ \ y^1_2  \le y^3_2 < y^4_2 \le y^2_2.$$ 
Then, for $t>0$ small,  there exist $\tilde{\by}^3_t,\tilde{\by}^4_t
\in  \lbrace 0 \rbrace  \times  \mathbb{R}^2 $   with
$\tilde{\by}^3_t \not = {\by}^3 $, $\tilde{\by}^4_t \not = {\by}^4
$,  
$\tilde{\by}^3_t \to \by^3$, and $\tilde{\by}^4_t \to \by^4$ as $t \to 0$ such that
\begin{align}\label{eq: lengths the same}
 |\by^1 - \tilde{\by}^4_t| = |\by^1-\by^4|,  \ \    |\by^2-\tilde{\by}^3_t| = |\by^2-\by^3|, \ \  \text{ and } \ \  |\tilde{\by}^3_t-\tilde{\by}^4_t| = |\by^3-\by^4|.
\end{align}       
\end{lemma}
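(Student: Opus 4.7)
The plan is to pass to the plane $\{0\}\times\mathbb{R}^2$, identified with $\mathbb{R}^2$ via the last two coordinates, so that all four given points $\by^j$ sit on the $y$-axis. The first two conditions of \eqref{eq: lengths the same} force $\tilde{\by}^4_t$ onto the circle $C_4$ of centre $\by^1$ and radius $|\by^1-\by^4|$, and $\tilde{\by}^3_t$ onto the circle $C_3$ of centre $\by^2$ and radius $|\by^2-\by^3|$. I parametrize both circles by planar rotations $R_\theta$ of the plane $\{0\}\times\mathbb{R}^2$,
\[
\tilde{\by}^4 = \by^1 + R_\alpha(\by^4-\by^1), \qquad \tilde{\by}^3 = \by^2 + R_\beta(\by^3-\by^2),
\]
so that the lemma reduces to producing, for $t>0$ small, a pair $(\alpha(t),\beta(t))$ with both coordinates nonzero and small, solving the single remaining equation
\[
f(\alpha,\beta):=|\tilde{\by}^3-\tilde{\by}^4|^2-|\by^3-\by^4|^2=0.
\]

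A direct expansion (using the angle-difference identity) gives, in case (a) with $a=y^2_2-y^1_2$, $r_3=y^3_2-y^2_2$, $r_4=y^4_2-y^1_2$,
\[
f(\alpha,\beta) = 2\big[\,a r_3(\cos\beta-1) - a r_4(\cos\alpha-1) - r_3 r_4(\cos(\beta-\alpha)-1)\,\big],
\]
and an analogous expression in case (b). Since all four $\by^j$ are collinear on the $y$-axis, the tangent motions of $\tilde{\by}^3,\tilde{\by}^4$ are orthogonal to $\by^3-\by^4$; hence $f(0,0)=0$ and $\nabla f(0,0)=0$, so the implicit function theorem does not apply directly and a second-order analysis is forced. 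The Hessian of $f$ at the origin is
\[
Q(\alpha,\beta) = r_4(a+r_3)\,\alpha^2 - 2r_3 r_4\,\alpha\beta + r_3(r_4-a)\,\beta^2,
\]
and its discriminant in the ratio $\beta/\alpha$ evaluates, via the identity $a+r_3-r_4=y^3_2-y^4_2$, to $4 r_3 r_4\,a\,(y^3_2-y^4_2)>0$ under (a); the parallel computation in case (b) yields the strictly positive discriminant $4a(a-b)c(c-b)$ with $b=y^3_2-y^1_2$, $c=y^4_2-y^1_2$.

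Having verified that $Q$ factors into two distinct real linear forms, I extract a branch of the zero set of $f$ by substituting $\beta=\alpha\eta$. This rewrites $f(\alpha,\alpha\eta)=\alpha^2 h(\alpha,\eta)$ for a smooth $h$ with $h(0,\eta)=\tfrac{1}{2}Q(1,\eta)$, a quadratic in $\eta$ admitting a nonzero simple root $\eta_\star$. Since $\partial_\eta h(0,\eta_\star)\ne 0$, the implicit function theorem delivers a smooth $\eta(\alpha)$ with $h(\alpha,\eta(\alpha))\equiv 0$ and $\eta(0)=\eta_\star$; setting $\alpha(t)=t$ and $\beta(t)=t\,\eta(t)$ gives both angles small and nonzero for $t>0$ small, and the rotation formulas then define $\tilde{\by}^4_t\ne\by^4$ and $\tilde{\by}^3_t\ne\by^3$ which satisfy \eqref{eq: lengths the same} and converge to $\by^4,\by^3$ as $t\to 0$.

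The main obstacle is the collinearity-induced degeneracy $\nabla f(0,0)=0$, which rules out a first-order application of the implicit function theorem and pushes the argument to the Hessian. A secondary subtlety arises in the boundary subcases where one coefficient of $Q$ vanishes (e.g.\ $r_4=a$ in (a) kills the $\beta^2$ coefficient, and likewise $c=a$ or $b=0$ in (b)): there one root of $Q(1,\eta)$ is at infinity, so the branch $\alpha=0$ is present but spurious (it would force $\tilde{\by}^4_t=\by^4$); the remaining root is still simple and nonzero, and the implicit function argument applied at this root still supplies the desired branch, since the positive discriminant prevents the two roots from coalescing.
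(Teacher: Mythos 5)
Your route is genuinely different from the paper's. The paper prescribes the perturbation of $\by^4$ explicitly as $\by^4+(0,s(t),t)$ and then recovers $\tilde{\by}^3_t$ from a single scalar equation via the intermediate value theorem, and it only carries out case (a), dismissing (b) as analogous. You instead parametrize both circles by rotation angles, observe that collinearity forces $\nabla f(0,0)=0$, and resolve the singularity of $\{f=0\}$ by the blow-up $\beta=\alpha\eta$. Your formulas for $f$, for $Q$, and for the discriminants ($4ar_3r_4(y^3_2-y^4_2)$ in case (a), $4a(a-b)c(c-b)$ in case (b)) are correct, and in case (a) the argument is complete, including the boundary subcase $r_4=a$: there the constant term $r_4(a+r_3)$ of $Q(1,\eta)$ is strictly positive, so the surviving root is indeed nonzero and simple. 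For case (a) this is a clean and fully rigorous alternative; it also has the merit of treating (b) with the same machinery instead of waving at it.

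There is, however, a genuine gap in case (b), precisely where your ``secondary subtlety'' paragraph stops. The two non-strict inequalities in (b) allow $y^1_2=y^3_2$ and $y^4_2=y^2_2$ \emph{simultaneously}, i.e.\ $b=0$ and $c=a$, so that $\by^3=\by^1$ and $\by^4=\by^2$. Then $Q(\alpha,\beta)=2ac\,\alpha\beta$: the discriminant is still positive and the two linear factors are distinct, but they are exactly the two spurious directions, with roots $\eta=0$ and $\eta=\infty$, and your argument selects nothing. This cannot be repaired: in this subcase one computes exactly $f(\alpha,\beta)=8a^2\sin\tfrac{\alpha}{2}\sin\tfrac{\beta}{2}\cos\tfrac{\alpha-\beta}{2}$ (the folded equilateral four-bar linkage), whose zero set near the origin is precisely $\lbrace \alpha=0\rbrace\cup\lbrace\beta=0\rbrace$; since the Hessian is a nondegenerate saddle, the Morse lemma confirms there are no further branches. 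So every admissible pair fixes either $\by^3$ or $\by^4$, and the lemma as stated is false there. This defect is inherited from the paper rather than introduced by you — the paper's proof never touches case (b) — but your write-up should either add the hypothesis excluding $\by^1=\by^3$, $\by^2=\by^4$, or check that this configuration cannot arise when the lemma is invoked in case \eqref{cases2} of the proof of Proposition~\ref{lemma: 0-shear char}.
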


\begin{proof}
For simplicity, we only prove case (a). Case (b) follows along similar
lines with a different notational realization.  Given $t>0$, let
$\tilde{\by}^4_t = \by^4 +  (0, s(t), t)$  with 
 $t \mapsto s(t) < 0$ such
that   $|\by^1 - \tilde{\by}^4_t| =
|\by^1-\by^4|$ holds (note that  necessarily $s(t) 
= {\rm O}(t^2)$).   As a next step, we let
$\tilde{\by}^3_t=(0,x_2^t,x_3^t)$ where  $x_3^t$  solves
\begin{align}\label{eq: solution?}
y^2_2 + \sqrt{|\by^2 - \by^3|^2 -   (x^t_3)^2  } = y_2^4 +  s(t) + \sqrt{|\by^3 - \by^4|^2 -  (x^t_3-t)^2  }
\end{align}
and  $x_2^t$  is defined by  
$$ x^t_2   = y^2_2 + \sqrt{|\by^2 - \by^3|^2 -  (x^t_3)^2 }.$$
Note that 
the above equations imply that 
 $|\by^2-\tilde{\by}^3_t| = |\by^2-\by^3|$ and
$|\tilde{\by}^3_t-\tilde{\by}^4_t| = |\by^3-\by^4|$.  By checking
that $   x_3^t = {\rm O}(t)$ as $t\to 0$ we hence obtain the assertion.

One is therefore left to prove that equation   \eqref{eq:
  solution?} has at least one solution $ x_3^t $ with $ x_3^t = {\rm
  O}(t)$.  An elementary albeit  tedious computation 
allows to rewrite  \eqref{eq: solution?} as  $2 t x_3^t   = g_t( x_3^t )$, where 
\begin{align}\label{eq: solution?2}
 g_t( x_3^t ) := t^2 + s^2 +2(b+s)(b-a) + 2(a-b-s)\sqrt{b^2 - ( x_3^t )^2},
\end{align}
where  we have used the shorthand notation  $a:= |\by^3 -
\by^4|$, $b = |\by^2 - \by^3|$,  and $s=s(t)$.  Recall that $a \ge b$ by
assumption  and that $s<0$.   

 If  $a>b$, we obtain  $g_t(0)>0$ and $g_t(b)<0$  for $t$
(and thus $s$) sufficiently small. Therefore, by the  Intermediate
Value Theorem applied to the continuous function $x \mapsto -2tx
+ g_t(x)$   we find $ x_3^t  \in  (0,b)$   such that
\eqref{eq: solution?2} holds.  By observing that  $g_t$  is maximized
at $x=0$,  since $\max g_t = g_t(0)= {\rm O}(t^2 + s^2 + s) = {\rm
  O}(t^2)$, we also get $ x_3^t  = {\rm O}(t)$. 

 If  $a = b$, we find $g_t(0) > 0 $   and $2bt > g_t(b)= t^2 +
s^2$ for $t$ sufficiently small.  Hence,  also in this case \eqref{eq: solution?2} has a solution $ x_3^t $ with   $ x_3^t  = {\rm O}(t)$.
\end{proof}

 Having prepared the approximation tool of Lemma \ref{lemma:
  circles}, we can now proceed to the proof of Proposition \ref{lemma:
  0-shear char}.

\begin{proof}[Proof of  Proposition  \ref{lemma: 0-shear char}]
 Suppose that $Z$ is simple.   Assume first that  
$$\min\lbrace \# I_{1,+}, \#I_{1,-} \rbrace = \min\lbrace \# I_{2,+}
, \#I_{2,-} \rbrace= 1.$$
 (Here and in the sequel, we drop '$(Z)$' from  the notation for simplicity.)  We want to prove that  $Z$ is  angle-rigid.   After rotation,
reflection, and relabeling of the vertices, it is not restrictive to
suppose that  $\bv^1 \in  I_{1,+}  $, $\bv^2 \in I_{2,+}$, $\bv^{1+ 2i}
\in   I_{1,-}$ for $i=1,\ldots, n/2-1$, and   $\bv^{2+ 2i} \in I_{2,-}$
for $i=1,\ldots, n/2-1$,  see Figure \ref{fig:p}.   
\begin{figure}[ht]
\centering
\pgfdeclareimage[width=65mm]{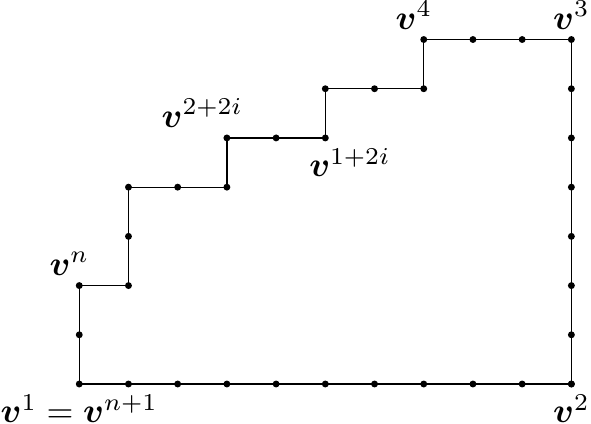}{p}  
\pgfuseimage{p} 
\caption{The points $\bv^i$. }
\label{fig:p}
\end{figure}

Let $\bvarphi$ be  an angle-preserving  mapping  on $Z$.   We may assume without restriction that
 $\bvarphi(\bv^i) = \bv^i$  for $i=1,2,3$. Recalling the
identification $\bv^{n+1} = \bv^1$ this particularly implies  that
$|\bvarphi(\bv^3) - \bvarphi(\bv^{n+1})| =  |\bv^3 - \bv^{ n+1}|$.  As
$\bv^{1+ 2i} \in   I_{1,-}$ for $i=1,\ldots, n/2-1$ and   $\bv^{2+ 2i}
\in I_{2,-}$ for $i=1,\ldots, n/2-1$, this is only possible if the
points $\lbrace \bvarphi(\bv^i)\rbrace_{i=3}^{n+1}$ are coplanar. 
Indeed, for any $4\le i \le n-1$, a rotation  of the points
$\bv^{i},\ldots,\bv^{n+1}$ around the axis containing the points
$\bv^i$ and $\bv^{i+1}$  decreases the distance between $\bv^3$ and
$\bv^{n+1}$.    By using \eqref{ar2} for $(\bv^n,\bv^1,\bv^2)$ and  $(\bv^2,\bv^3,\bv^4)$ this yields that $\lbrace \bvarphi(\bv^i)\rbrace_{i=3}^{n+1}$ are contained in $\mathbb{Z}^2 \times \lbrace 0 \rbrace$, and therefore   $\bvarphi$  is the identity.  This proves  that $Z$ is angle-rigid.

Next, if $Z$ is assumed to be angle-rigid,  the validity of Condition (i) of Definition \ref{def:
  shear} clearly follows, i.e., $Z$ is $0$-shear resistant.

 Eventually, we assume that $Z$ is $0$-shear resistant and  show that $\min\lbrace \# I_{1,+}, \#I_{1,-} \rbrace = \min\lbrace \# I_{2,+} , \#I_{2,-} \rbrace= 1.$  We show the contrapositive:  suppose that $\min\lbrace \#
I_{2,+} , \#I_{2,-} \rbrace \ge 2$ and we prove that $Z$ is not $0$-shear-resistant  (the case  $\min\lbrace \#
I_{1,+} , \#I_{1,-} \rbrace \ge 2$ is analogous). 
Up to  a  cyclic relabeling of the vertices, we  can reduce
 the problem to  the following two different cases: we have
$\lbrace \bx^1,\ldots,\bx^4\rbrace$  with $\bx^i = \bv^{m(i)}$ for
$i=1,\ldots,4$, where $m\colon \lbrace 1,\ldots,4\rbrace \to \lbrace
1,\ldots,n\rbrace$ is strictly increasing, such that  one of the
following two cases hold 
\begin{align}
 &  {\rm (a)} \quad \bx^1, \bx^2 \in I_{ 2 ,+} \text{ and } \bx^3, \bx^4
   \in I_{ 2 ,-} \text{ with }  x^1_2 < x^2_2 < x^3_2 \text{ and } x^1_2
   < x^4_2 < x^3_2,\label{cases1}\\
& {\rm (b)} \quad \bx^1, \bx^3 \in I_{ 2 ,+} \text{ and }    \bx^2, \bx^4
\in I_{ 2 ,-} \text{ with }  x^1_2  \le x^3_2 < x^2_2 \text{ and }
x^3_2 < x^4_2, \label{cases2}
\end{align}
 see Figure \ref{fig:x}. 
  \begin{figure}[ht]
\centering
\pgfdeclareimage[width=100mm]{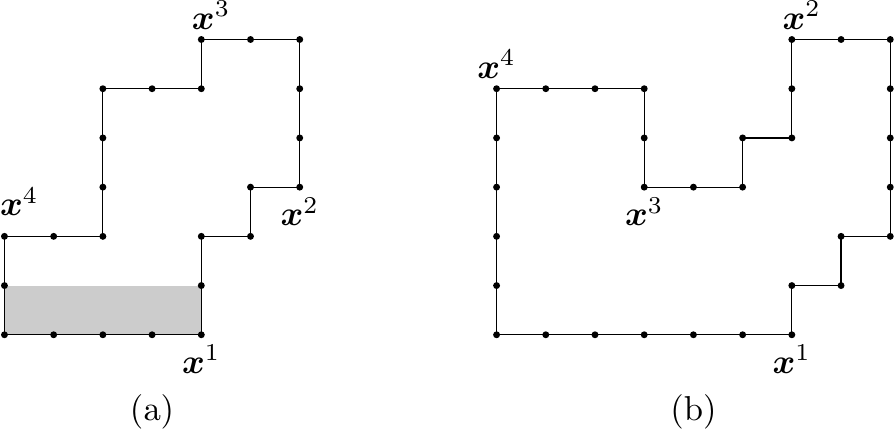}{x}  
\pgfuseimage{x} 
\caption{ The two cases from \eqref{cases1}--\eqref{cases2}. The shaded
  region is the paraxial rectangle mentioned in the proof.}
\label{fig:x}
\end{figure}

We treat case (a) first.   After reflection of the cell along the
$x_2$-axis  and interchanging the labels for $\bx^2$ and $\bx^4$,  we may suppose that $x^4_2 \le  x^2_2$.  We denote the
orthogonal projection of $\bx^i$  onto $ \lbrace 0 \rbrace \times
\Rz \times \lbrace 0 \rbrace $ by   $\by^i$ and note that $y^1_2 <
y_2^4 \le y_2^2 < y_2^3 $. We apply  Lemma \ref{lemma: circles}(a) and
obtain two points $\tilde{\by}^3_t, \tilde{\by}^4_t \in  \lbrace 0 \rbrace \times  \mathbb{R}^2$
for $t>0$ small such that \eqref{eq: lengths the same} holds. By
$R^{23}_t$ we denote the  rotation in $\Rz^3$ with axis parallel
to the $x_1$-axis which  leaves $\bx^2$ unchanged and moves $\bx^3$ to $\bx^3 + \tilde{\by}^3_t - \by^3_t$.  By $R^{34}_t$ we denote the  rotation in $\Rz^3$ with axis parallel
to the $x_1$-axis which moves $\bx^3$ to $\bx^3 + \tilde{\by}^3_t- \by^3$ and moves $\bx^4$ to $\bx^4 + \tilde{\by}^4_t - \by^4$. Finally,  by $R^{14}_t$ we denote the  rotation in $\Rz^3$ with axis parallel
to the $x_1$-axis  which leaves $\bx^1$ unchanged and moves $\bx^4$ to $\bx^4 + \tilde{\by}^4_t - \by^4$.  Note that these isometries exist due to \eqref{eq: lengths the same}.   We   now define  $\bvarphi$  on the vertices $\lbrace \bv^1,\ldots,\bv^n \rbrace$ by
\begin{align}\label{eq: transformation}
\bvarphi(\bv^i) = \begin{cases}   \bv^i & \text{if }  m(1) \le i \le m(2), \\   R^{23}_t(\bv^i) & \text{if }  m(2) \le i \le m(3), \\ R^{34}_t(\bv^i) & \text{if }  m(3) \le i \le m(4), \\ R^{14}_t(\bv^i) & \text{if }  m(4) \le i \le m(1). \end{cases} 
\end{align}
(Note that the definition is consistent for $i = m(j)$ for
$j=1,\ldots,4$.)  This mapping can be naturally extended to the points $\lbrace \bp^1,\ldots,\bp^m\rbrace$ of the cell $Z$: given a point $\bp^j \notin \lbrace \bv^1,\ldots,\bv^n \rbrace$ lying on the segment between $\bv^i$ and $\bv^{i+1}$, we set $\bvarphi(\bp^j) = R_t(\bp^j )$, where  $R_t$ denotes the isometry under which both $\bv^i$ and $\bv^{i+1}$ are moved (i.e., the identity or $R^{23}_t,  R^{34}_t,  R^{14}_t$).  By construction we can check that $\bvarphi$
 is angle-preserving.  Indeed, we observe that all bonds of $Z$ only join consecutive points $\bp^i$ and $\bp^{i+1}$, $i=1,\ldots,m$, since $Z$ is simple. Then,   the fact that the
mappings $R^{23}_t$, $R^{34}_t$, and $R^{14}_t$ are isometries implies
\eqref{ar1}.  In order to check \eqref{ar2},  we additionally
use that the isometries are  actually rotations about axes
parallel to the $x_1$-axis  and all bonds of $Z$ are of course either parallel
or orthogonal to $x_1$.  
Moreover, since $R^{14}_t$ is not the identity, we find that the
condition in Definition \ref{def: shear}(i) is violated for at least
one paraxial rectangle  (shaded region in Figure \ref{fig:x} left).

The case (b) is very similar and we only indicate the necessary
adaptations.  After reflection of the cell along the $x_2$-axis, we
may suppose that    $x^4_2 \le  x^2_2$.   As before, we indicate by $\by^i$
the orthogonal projection of $\bx^i$  onto $\lbrace 0 \rbrace \times
  \Rz \times \lbrace 0 \rbrace $  and note that $y^1_2 \le y_2^3 <
  y_2^4 \le y_2^2 $. We now apply Lemma \ref{lemma: circles}(b) and
  argue similarly as before to define  $\bvarphi$. 
\end{proof}

 We now proceed with the characterization of $0$-shear-resistance for
 general cells,  i.e., for possibly not simple cells.  To this end, recall the notions introduced at the beginning of Section \ref{sec:char}.

\begin{proposition}[Characterization of $0$-shear-resistance]\label{lemma: 0-shear char-gen}
A cell $Z\in \mathcal{Z}(C) $ is $0$-shear-resistant if and only if
\begin{align*}
&Z \notin \mathcal{F} \ \  \text{and} \nonumber\\
& \min\lbrace \# I_{1,+}(Z'), \#I_{1,-}(Z') \rbrace  =  \min\lbrace \#
I_{2,+}(Z'), \#I_{2,-}(Z') \rbrace  =  1 \quad \forall \, Z' \in
\lbrace Z^{\rm out}\rbrace \cup \mathcal{I}_Z.
\end{align*}
\end{proposition}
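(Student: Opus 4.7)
The plan is to prove both directions by reducing to the simple-cell case of Proposition~\ref{lemma: 0-shear char} applied to each element of $\lbrace Z^{\rm out}\rbrace \cup \mathcal{I}_Z$, and to control how these simple subcells fit together using the folding characterization of Proposition~\ref{prop: F}.

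For the sufficiency direction, I would assume both conditions and fix an angle-preserving mapping $\bvarphi$ on $Z$. Each $Z' \in \lbrace Z^{\rm out}\rbrace \cup \mathcal{I}_Z$ is simple and satisfies the min condition, hence is angle-rigid by Proposition~\ref{lemma: 0-shear char}; since $\bvarphi|_{Z'}$ is angle-preserving on $Z'$ and close to the inclusion, it agrees with an isometry $\Phi_{Z'}$ of $\Rz^3$. I would then check compatibility of these isometries along any pair of subcells that share a bond: the discrepancy $\Phi_{Z_1}\circ \Phi_{Z_2}^{-1}$ fixes both endpoints of the shared bond and, being close to the identity, is necessarily a small rotation about the line through that bond. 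Such a rotation realizes a folding of $Z$ in the sense of Definition~\ref{def:s}, with axis $A$ the $\Zz^2$-axis contained in the bond's line; this would contradict $Z \notin \mathcal{F}$. Hence all $\Phi_{Z'}$ coincide, $\bvarphi$ is a global isometry on $Z$, and coplanarity of every paraxial rectangle with interior in $f(Z)$ is trivially preserved.

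For the necessity direction I would argue the contrapositive. If $Z \in \mathcal{F}$, Definition~\ref{def:s} provides a nontrivial angle-preserving rotation of a subconfiguration of $Z$ about a $\Zz^2$-axis $A$, which moves off the plane $\Rz^2 \times \lbrace 0 \rbrace$ only the points on one side of $A$; choosing two parallel bonds of $Z$ on opposite sides of $A$ whose endpoints form a paraxial rectangle with interior in $f(Z)$, the four images are noncoplanar, violating $0$-shear-resistance. If instead the min condition fails on some $Z' \in \lbrace Z^{\rm out}\rbrace \cup \mathcal{I}_Z$, I would invoke the explicit shearing construction in the proof of Proposition~\ref{lemma: 0-shear char} to build a nontrivial angle-preserving map on $Z'$ that moves certain bonds of $Z'$ out of the plane, and extend it to $Z$ by the identity on every other simple subcell. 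This extension is consistent and angle-preserving because distinct simple subcells of $Z$ share no bonds (otherwise $f(Z)$ would split into more faces). Pairing a sheared bond of $Z'$ with a parallel, unperturbed bond on a neighboring subcell then yields a paraxial rectangle with interior in $f(Z)$ whose four images are noncoplanar.

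The main obstacle is this last step when $Z' \in \mathcal{I}_Z$: the rectangle used in the Proposition~\ref{lemma: 0-shear char} construction has interior in $f(Z')$, which is disjoint from $f(Z)$, so it cannot be reused directly. I would resolve this by isolating the sheared pair of parallel bonds from the construction and pairing one of them with a parallel, unperturbed bond on $Z^{\rm out}$ or on another inner cell lying across $f(Z)$, checking case by case (using the classification of $I_{j,\pm}(Z')$ underlying the construction) that such a companion bond always exists because $f(Z)$ is adjacent to every simple subcell of $Z$ along each of its boundary components.
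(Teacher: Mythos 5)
Your overall strategy---apply Proposition~\ref{lemma: 0-shear char} to each member of $\lbrace Z^{\rm out}\rbrace \cup \mathcal{I}_Z$ and use $Z \notin \mathcal{F}$ to control how the resulting isometries glue---is the same as the paper's, but two of your key steps do not hold as stated. The clearest problem is in the necessity direction: your extension of the shearing map rests on the claim that distinct simple subcells of $Z$ share no bonds, and this is false. The faces of the inner cells together with $f(Z)$ tile $f(Z^{\rm out})$, so adjacent members of $\lbrace Z^{\rm out}\rbrace \cup \mathcal{I}_Z$ do share bonds (see Figure~\ref{fig:simple}; the paper's preliminary observation quantifies this: at most one shared bond, and each connected component of $Z \setminus Z'$ attaches to $Z'$ at one or two points). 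It also contradicts your own sufficiency argument, which hinges on pairs of subcells sharing a bond. Consequently, extending $\tilde{\bvarphi}$ by the identity off $Z'$ is not angle-preserving: the shearing construction moves the attachment points of $Z'$, so every component of $Z \setminus Z'$ hanging off a moved portion must be transported along with it. The paper's extension does exactly this: for each component $\widehat{C}$ it identifies the single isometry $R_t$ that $\tilde{\bvarphi}$ applies to the one or two attachment points (possible because those points lie on at most two lines of the piecewise-isometric construction \eqref{eq: transformation}) and sets $\bvarphi = R_t$ on $\widehat{C}$.

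In the sufficiency direction, the step ``a nontrivial discrepancy $\Phi_{Z_1}\circ\Phi_{Z_2}^{-1}$ realizes a folding of $Z$, contradicting $Z \notin \mathcal{F}$'' is not justified. A folding in the sense of Definition~\ref{def:s} is a globally defined map of a very specific form (rotate all of a subconfiguration $\widetilde{C}$ about an axis, fix the rest), and the existence of an angle-preserving $\bvarphi$ acting by different isometries on two adjacent subcells does not by itself produce such a map: the remaining subcells carry yet other isometries, and $Z_1$ and $Z_2$ may be linked elsewhere in $Z$. To contradict $Z \notin \mathcal{F}$ you must pass through the combinatorial characterization \eqref{suff} via Proposition~\ref{prop: F}; this is precisely the content of the paper's path argument, which shows that if coplanarity failed to propagate across an interface, then removing the points of the corresponding axis with at most one neighbor off that axis would disconnect $Z$. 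Your argument also tacitly assumes every interface is a full shared bond, whereas a component may attach at a single point, in which case the discrepancy need not be a rotation about a line. Finally, the existence of the violated paraxial rectangle with interior in $f(Z)$ (your acknowledged ``main obstacle'') is left as a promise; note that the paper resolves the analogous issue in its $Z \in \mathcal{F}$ case by first securing coplanarity of $Z^{\rm out}$ from the min condition, an anchor unavailable in your case split since you treat $Z \in \mathcal{F}$ without assuming that condition.
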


\begin{proof}
We start with a preliminary  observation.    Consider a cell in $Z' \in \lbrace
Z^{\rm out}\rbrace \cup \mathcal{I}_Z$. Then, each cell $\tilde{Z} \in
\lbrace Z^{\rm out}\rbrace \cup \mathcal{I}_Z$ with $\tilde{Z} \neq
Z'$ and  $Z' \cap \tilde{Z} \neq \emptyset$ shares at most one bond
with $Z'$. Consequently, for  each connected component $\widehat{C}$
of $Z \setminus Z'$ there is at least one and at most two points in
$Z'$ connected by a bond in $Z$  to $\widehat{C}$. (If there are two
points, these stay at  distance $1$.)  This directly comes from the
definition  of ${\mathcal I}_Z$ and $Z^{\rm out}$. In fact,  $f(Z)$ and $f(Z')$ for $Z' \in 
{\mathcal I}_Z$  corresponds to a  partition of $f(Z^{\rm out})$,   and  all segments in the bond structure which are not contained in $\partial f(Z)$ have length $1$, see Figure \ref{fig:simple}.

We now start with the actual proof.  Assume first that $Z \notin \mathcal{F}$ and   
$$\min\lbrace \# I_{1,+}(Z'), \#I_{1,-}(Z') \rbrace = \min\lbrace \# I_{2,+}(Z')
, \#I_{2,-}(Z') \rbrace= 1 \quad  \forall \, Z' \in \lbrace Z^{\rm out}\rbrace \cup \mathcal{I}_Z.$$
Our goal is to show that $Z$ is $0$-shear-resistant. To this end, let $\bvarphi$ be an angle-preserving   mapping  of $Z$. As each cell in $\lbrace Z^{\rm out}\rbrace \cup \mathcal{I}_Z$ is simple, Proposition \ref{lemma: 0-shear char} yields that the points of each cell in $\lbrace Z^{\rm out}\rbrace \cup \mathcal{I}_Z$ remain coplanar under  $\bvarphi$.   Moreover, for each $Z' \in  \mathcal{I}_Z$ and each $\bx \in
Z'$, we find a path  $(\bq^1,\ldots, \bq^l)$  in $Z$ connecting $\bx$
with a point $\by \in Z^{\rm out}$ such that, whenever $\bq^{i-1} \in
\tilde{Z}' \setminus \tilde{Z}''$, $\bq^{i} \in \tilde{Z}' \cap
\tilde{Z}''$, and $\bq^{i+1} \in \tilde{Z}'' \setminus \tilde{Z}'$ for
two cells $\tilde{Z}', \tilde{Z}'' \in \lbrace Z^{\rm out}\rbrace \cup
\mathcal{I}_Z$, then $\bq^{i+1} - \bq^i = \bq^i- \bq^{i-1}$. In fact,
otherwise we would necessarily find an axis such that \eqref{suff}
holds. Here, we use the preliminary observation that each cell
$\tilde{Z}'$ contains at most two points which are bonded to the
connected component $\widehat{C}$ of $Z \setminus \tilde{Z}'$ with
$Z^{\rm out} \setminus \tilde{Z}' \subset \widehat{C}$, where, if two
points exist, they have distance $1$ and thus lie on one axis.  This
property, however, contradicts $Z \notin \mathcal{F}$ and
Proposition~\ref{prop: F}.   This can be visualized in Figure
\ref{fig:simple}, left. Take $\tilde Z'=Z_3$, $\tilde Z''=Z_4$ and let  $\boldsymbol
q^i$  be  any of the two  points in $Z_3 \cap Z_4$. Then,  $\bq^{i+1} -
\bq^i \not= \bq^i- \bq^{i-1}$.  Indeed, \eqref{suff} holds for the axis given by  $Z_3 \cap Z_4$, and the configuration
turns out to be  a folding  around this axis.  


Now consider some $\bq^i \in \tilde{Z}' \cap \tilde{Z}''$ as
above. Since $\bq^{i+1} - \bq^i = \bq^i- \bq^{i-1}$, \eqref{ar2} and
the fact that the points of each of the two cells $ \tilde{Z}'$ and
$\tilde{Z}''$  remain coplanar under  $\bvarphi$, 
imply that the points in  $ \tilde{Z}' \cup \tilde{Z}''$ remain
coplanar under  $\bvarphi$.  A  successive application of this argument yields that the points in $Z^{\rm out} \cup Z'$ remain coplanar under  $\bvarphi$  for every $Z' \in \mathcal{I}_Z$. Eventually, this shows that all points of $Z$ remain coplanar under  $\bvarphi$,   and thus $Z$ is  $0$-shear-resistant.

We now address the reverse implication: to this end, we first suppose
that there exists $Z' \in \lbrace Z^{\rm out}\rbrace \cup
\mathcal{I}_Z$ such that $$\min\lbrace \# I_{1,+}(Z'), \#I_{1,-}(Z')
\rbrace >1\ \ \text{or} \  \ \min\lbrace \#
I_{2,+}(Z'), \#I_{2,-}(Z') \rbrace  >  1.$$
 As $Z'$ is simple, we can apply  Proposition \ref{lemma: 0-shear char}  to find a nontrivial angle-preserving   mapping  $\tilde{\bvarphi}$ of $Z'$ such that the
condition in Definition \ref{def: shear}(i) is violated.  It now
suffices to check that  $\tilde{\bvarphi}$ can be extended to an
angle-preserving   mapping  on the whole  of $Z$. To this end,
denote the points of $Z'$ by   $\lbrace \bp^1,\ldots,\bp^m\rbrace$,
and  fix a connected component $\widehat{C}$ of $Z \setminus Z'$.  In
view of our preliminary observation, we need to address two cases: (a)
there is exactly one point $\bp^i \in Z'$ bonded to $\widehat{C}$ and
(b) there are exactly two consecutive points $\bp^{i}, \bp^{i+1} \in
Z'$ bonded to $\widehat{C}$. In case (a), the three points $\bp^{i-1},
\bp^i, \bp^{i+1}$ are contained in two lines and thus moved by the
same isometry   under $\tilde{\bvarphi} $,  see the
construction in \eqref{eq: transformation}. We denote this isometry by
$R_t$ and set  $\bvarphi(\bx) = R_t(\bx)$ for all $\bx \in
\widehat{C}$. In case (b), the four points $\bp^{i-1}, \bp^i,
\bp^{i+1}, \bp^{i+2}$ are contained in two lines and thus, as before,
they are  moved by a single isometry $R_t$    under $\tilde{\bvarphi} $.  We again set  $\bvarphi(\bx) = R_t(\bx)$ for all $\bx \in \widehat{C}$. This construction leads to an angle-preserving   mapping  $\bvarphi$ defined on $Z$, which coincides with $\tilde{\bvarphi}$ on $Z'$. This shows that $Z$ is not  $0$-shear-resistant.

It remains to address the case  $$Z \in \mathcal{F} \ \text{and} \ \min\lbrace \# I_{1,+}(Z'), \#I_{1,-}(Z') \rbrace  =  \min\lbrace \#
I_{2,+}(Z'), \#I_{2,-}(Z') \rbrace  =  1$$
 for all $Z' \in \lbrace Z^{\rm out}\rbrace \cup
\mathcal{I}_Z$. In this case,  by Proposition \ref{prop: F} we find a nontrivial angle-preserving mapping  $\bvarphi$ of $Z$ of the form indicated in Definition \ref{def:s}. Now, we need to check that  the condition in Definition \ref{def: shear}(i) is violated for at least
one paraxial rectangle. In fact, by Proposition \ref{lemma: 0-shear
  char} we get that all points of $Z^{\rm out}$ are coplanar under
$\bvarphi$. On the other hand, by construction  necessarily there
exists  a bond of $Z$ in the open region $f(Z^{\rm out})$ which does not remain coplanar to $Z^{\rm out}$ under $\bvarphi$. Therefore, there exists a paraxial rectangle in $Z$ which does not remain coplanar under $\bvarphi$.      
\end{proof}

\section*{ Acknowledgements}
 This work is partially supported 
 by the FWF-DFG grant I\,4354, the FWF
grants F\,65, I\,5149, and P\,32788, by the Vienna Science and Technology Fund (WWTF)
through project MA14-009, and by the OeAD-WTZ
project CZ 01/2021. This work was supported by the Deutsche
Forschungsgemeinschaft (DFG, German Research Foundation) under
Germany's Excellence Strategy EXC 2044 -390685587, Mathematics
M\"unster: Dynamics--Geometry--Structure. The authors want to thank
Jan Legersk\'y for his helpful comments on a previous version of the manuscript.

\bibliographystyle{alpha}


\end{document}